\documentclass{article}
\usepackage{graphicx} 
\usepackage{hyperref}
\usepackage{subcaption}
\usepackage{amsmath, amssymb, amsthm}
\usepackage{xcolor}
\usepackage[toc,page]{appendix}
\usepackage{enumitem}

\DeclareMathOperator{\LO}{LO}
\DeclareMathOperator{\SG}{SG}
\newcommand{\sLO}{\mathcal{LO}}
\newcommand{\card}[1]{\lvert #1 \rvert}
\let\union\cup
\let\bigunion\bigcup
\let\partitions\vdash
\newcommand{\vocab}[1]{\emph{#1}}
\newcommand{\ceiling}[1]{\lceil #1 \rceil}

\newcommand{\powerset}{\mathcal{P}}
\newcommand{\Tset}{\mathcal{T}}

\title{Counting Strict Gridlock on Graphs}
\author{Matthew I. Jones, Zachary Winkeler}
\date{}

\usepackage{hyperref}
\hypersetup{
    colorlinks,
    linkcolor={red!50!black},
    citecolor={blue!50!black},
    urlcolor={blue!80!black}
}
\usepackage[capitalise,noabbrev,nameinlink]{cleveref}

\theoremstyle{plain}
\newtheorem{theorem}{Theorem}
\newtheorem{lemma}[theorem]{Lemma}
\newtheorem{proposition}[theorem]{Proposition}
\newtheorem{corollary}[theorem]{Corollary}

\theoremstyle{definition}
\newtheorem{example}[theorem]{Example}
\crefname{example}{Example}{Examples}

\begin{document}

\maketitle

\section*{Abstract}

Graph colorings have been of interest to mathematicians for a long time, but relatively recently, social scientists have also found them to be interesting tools for studying group behavior. In the last 20 years, scientists have begun to study how coloring problems can be solved by groups of individuals on a graph, which has led to new insights into network structure, group dynamics, and individual human behavior. Despite this newfound utility, the exact nature of these distributed coloring problems is not well-understood, and established mathematical tools like the chromatic polynomial miss the unique challenges that arise in these social problem-solving situations with limited information. In this paper, we provide a new framework for understanding these distributed problems by defining a new kind of graph coloring with particular relevance to consensus formation on networks, in which all vertices are trying to agree on a common color. These \emph{strict gridlock} colorings represent roadblocks to consensus where the group will not reach a uniform coloring using natural update processes. We describe a recurrence relation that provides an algorithm for counting these gridlocked colorings, which establishes a mathematical measure of how much a given graph hinders consensus in a group.

\section{Introduction}

To mathematicians, graph colorings can be interesting purely for their intriguing properties and puzzle-like nature. However, they have also proven useful for a surprisingly diverse body of problems from structural questions in Ramsey theory~\cite{Graham_Rothschild_Spencer_2013} to the coloring of maps in the famous four color theorem~\cite{Appel_Haken_1989}.

The traditional notion of a graph coloring, which we refer to as a \emph{proper} graph coloring, is a labeling of the vertices of a graph with $k$ colors so that no two adjacent vertices have the same color, and these mathematical objects have many interesting properties. However, this core idea has been expanded in many different ways. Perhaps the most obvious extension is to color the edges of a graph, instead of the vertices. 
After defining proper (vertex or edge) colorings, one could introduce additional conditions on the colorings to get even more restricted sets of colorings.
For example, equitable colorings require that all colors appear the same number of times~\cite{Lih_1998}.
Harmonious colorings require that for each pair of colors, at most one edge in the entire graphs connects vertices with those colors~\cite{Georges_1995}.
Rainbow colorings are colorings on the edges of a graph such that for every pair of vertices, there exists a path between them where no color appears more than once~\cite{Chartrand_Johns_McKeon_Zhang_2008}.
List colorings, unlike the other colorings above, assign each vertex a list of colors. In this case, the challenge is typically to find the shortest possible lists so that a proper coloring can be found by assigning each vertex colors from its list~\cite{Erdos_Rubin_Taylor_1979}.

\subsection{Proper Graph Colorings in the Social Sciences}
These graph coloring problems appear in a wide variety of applications. 
In particular, graph coloring problems have been used specifically to study the behavior and coordination of groups, with each participant required to select the color for a single vertex using limited information, and with color representing some behavior or action of the individual. This has been a fruitful approach and has yielded many empirical results linking group structure and coordination.
Some of the first empirical studies examined different network structures that made the problem more or less difficult~\cite{Kearns_Suri_Montfort_2006, Judd_Kearns_Vorobeychik_2010, McCubbins_Paturi_Weller_2009}.
Later work focused on the behavior of individuals embedded in the network, considering both random color choice~\cite{Shirado_Christakis_2017, Jones_Pauls_Fu_2021a} and altering the network structure~\cite{Chiang_Cho_Chang_2024}.
These problems, which exist in a social setting involving many decision-makers, make the problem more challenging than the traditional graph coloring problem. (Note that solving these graph problems in a distributed setting is different than on-line graph coloring, in which vertices must be colored one at a time by a central decision-maker~\cite{Lovasz_Saks_Trotter_1989}.)

Proper graph colorings can be thought of as modeling anti-coordination problems in groups, in which individuals must choose a different strategy or behavior from their neighbors. However, not every collective action problem is an anti-coordination problem. Other social challenges can also be modeled with (non-proper) graph coloring problems. As one example, Erikson and Shirado used a variation on proper graph colorings to study the division of labor on networks~\cite{Erikson_Shirado_2021}. In this problem, it is acceptable for a vertex to share a color with its neighbors as long as it is adjacent to all the available colors.

\subsection{Consensus and Uniform Colorings}

In other common scenarios, groups are tasked with reaching consensus on one of $k$ possible choices. This ranges from friends choosing a restaurant for dinner to a pack of animals choosing which a direction of travel~\cite{Couzin_Krause_Franks_Levin_2005} to jury verdicts of guilty or not guilty~\cite{Feddersen_Pesendorfer_1998}, each with their own complications. From a game theory perspective, all these problems fall into the broad category of coordination games, and can be further divided by the types of incentives that participants face into several well-studied games, including Matching Pennies~\cite{Belot_Crawford_Heyes_2013} or Battle of the Sexes~\cite{Robinson_Goforth_2012}. In the iterated setting, a natural but naive strategy is to choose the most common choice among all players in the previous round. 

However, when these games are played on networks and participants have limited knowledge of the group as a whole, additional challenges arise. Individuals are responsible for selecting a strategy to align with their neighbors without knowing how those choices affect the entire network. Instead of reaching consensus, naive and greedy strategies (for a rigorous description of these update rules and variants, see \cite{Jones_Pauls_Fu_2021b}) can lead to gridlocked states, in which all participants are making the ``best'' choice and the population stagnates without deciding on a common outcome. 

Such challenges can be modeled by another (non-proper) graph coloring problem: find a uniform coloring in which all vertices have the same color. Of course, this is trivial from a global perspective. But when solved in a social setting with multiple decision-makers and limited information, they become highly non-trivial and represent solutions to consensus formation problems. 
Jones and Christakis tasked individuals with finding a uniform coloring to study the role of leaders in the formation of consensus in networks~\cite{Jones_Christakis_2024}. Other studies examined how groups find uniform colorings (framed as consensus or voting) with unbalanced incentives and found that network topology shaped the group's ability to reach consensus and controlled if the majority or minority preference was selected~\cite{Judd_Kearns_Vorobeychik_2010, Kearns_Judd_Tan_Wortman_2009}. Despite this progress, there has been very little research studying these problems analytically. Related work in the continuous space like the DeGroot Model~\cite{Degroot_1974} misses the challenges that are present in the discrete space. 

A key component to understanding the uniform graph coloring problem, and therefore understanding consensus, is to define and study the states corresponding to gridlock, the dead ends in the search for a global solution to the consensus problem. To that end, we introduce two new related graph colorings: the locally optimal coloring and the strict gridlock coloring. Then we describe a recursive algorithm that counts these colorings, showing how graph structure can help or hinder the formation of consensus in networked groups.

\section{The Locally-Optimal Polynomial}

Let $G = (V,E)$ be a graph. A \vocab{locally-optimal $k$-coloring} is a function $c: V \to [k]$, thought of as assigning a color to each vertex, such that each $v \in V$ shares a color with a plurality of its neighbors. Here, we are using the word ``plurality'' to mean that among the neighbors of $v$, some color appears more often than any other color. As a trivial consequence, if a vertex $v$ has no neighbors, then no color can have a plurality among the neighbors of $v$.

A locally-optimal coloring is a \vocab{strict gridlock coloring} if two or more distinct colors are assigned to vertices; otherwise it is a \vocab{consensus coloring}. 

Given a graph $G$, let $\sLO_k(G)$ denote the set of locally-optimal $k$-colorings of $G$, and let
\begin{equation*}
    \LO_k(G) = \lvert \sLO_k(G) \rvert
\end{equation*}
be the cardinality of this set. We call $\LO_k(G)$ the \vocab{LO-polynomial} of $G$, which is justified by the following theorem.
\begin{theorem}\label{thm:polynomial}
    The number of locally-optimal $k$-colorings of $G$ is a polynomial in $k$.
\end{theorem}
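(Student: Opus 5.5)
The approach is to group colorings by the set partition of $V$ they induce. This is the same idea as the standard proof that the chromatic polynomial is a polynomial.

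Every function $c: V \to [k]$ determines a partition $\pi$ of $V$ into nonempty blocks: two vertices share a block exactly when they receive the same color. The key observation is that whether $c$ is locally optimal depends only on $\pi$, not on which colors are attached to the blocks. To see this, fix a vertex $v$. The multiset of colors on its neighborhood $N(v)$ is described, up to renaming colors, by the sizes $\lvert N(v) \cap B \rvert$ as $B$ ranges over the blocks of $\pi$. The condition that $v$ shares a color with a plurality of its neighbors says the following. Let $B_v$ be the block containing $v$. Then $\lvert N(v)\cap B_v\rvert > \lvert N(v)\cap B\rvert$ for every block $B \ne B_v$. (If $N(v)$ is empty, all these numbers are $0$, the strict inequality fails, and $v$ is not satisfied; this matches the paper's convention.) This condition is stated purely in terms of $\pi$, and it is invariant under any injective relabeling of the blocks by colors. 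So we may call a partition $\pi$ \emph{good} when every vertex satisfies this inequality.

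Next we count. For a fixed partition $\pi$ with $j$ blocks, the colorings inducing exactly $\pi$ are the injective assignments of colors to blocks. There are $k(k-1)\cdots(k-j+1) = (k)_j$ of them, the falling factorial. This expression is a polynomial in $k$, and it correctly equals $0$ when $k < j$. Summing over all partitions gives
\begin{equation*}
    \LO_k(G) \;=\; \sum_{\pi \text{ good}} (k)_{\lvert \pi \rvert} \;=\; \sum_{j=1}^{\lvert V\rvert} a_j\, (k)_j ,
\end{equation*}
where $a_j$ is the number of good partitions of $V$ into $j$ blocks. The coefficients $a_j$ depend only on $G$ and not on $k$. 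So $\LO_k(G)$ is a finite integer combination of polynomials in $k$, and hence is itself a polynomial in $k$, valid for every $k \ge 1$.

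The only step needing real care is the invariance claim: that the plurality condition is determined by the induced partition. In particular, "plurality" must be read as a strict maximum, so that ties, including the empty-neighborhood case, are handled uniformly. Once that is settled, the rest is the routine falling-factorial count. As a by-product, the term with $j=1$ is $a_1 k$, which counts the consensus colorings. Here $a_1 = 1$ exactly when $G$ has no isolated vertices, and $a_1 = 0$ otherwise. The remaining terms with $j \ge 2$ count the strict gridlock colorings, so they also form a polynomial in $k$.
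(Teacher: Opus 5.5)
Your proof is correct and is essentially the paper's second proof (``Another proof'' via \cref{thm:partition}, $\LO_k(G)=\sum_{\pi\partitions G}k^{\underline{\card{\pi}}}$), and you spell out the color-relabeling invariance that the paper leaves implicit; the paper's first proof instead runs the recurrences of \cref{thm:any-degree} and \cref{thm:recurrence} down to bivalent-dense graphs with $\LO_k=k^p$, which is much less self-contained. One small fix: when $\pi$ has a single block, your inequality condition holds vacuously (it does not fail) at an isolated vertex, so add $N(v)\neq\emptyset$ to the definition of a good partition to get the $a_1=0$ you claim (this does not affect polynomiality).
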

The proof of \cref{thm:polynomial} appears later in this paper after we have developed a bit more machinery for reasoning about locally-optimal colorings.

We can also define the \vocab{SG-polynomial} $\SG_k(G)$ of a graph $G$ to be the number of colorings of $G$ which are strict gridlocks. Since each graph has exactly $k$ consensus colorings given $k$ total colors, $\SG_k(G)$ only differs from $\LO_k(G)$ by the absence of these colorings. In symbols,
\begin{equation*}
    \SG_k(G) = \LO_k(G) - k \,.
\end{equation*}
While $\SG_k(G)$ counts the ``interesting'' gridlocks, it turns out that the LO-polynomial has better properties, so most of our results in this paper will be phrased in terms of locally-optimal colorings. Here are some immediate examples of the sort of properties that $\LO_k(G)$ has.

\begin{proposition}\label{thm:complete}
    The LO-polynomial of a complete graph $K_n$ is $\LO_k(K_n) = k$.
\end{proposition}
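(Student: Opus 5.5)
The plan is to prove the two inclusions directly: every consensus coloring of $K_n$ is locally optimal, and no strict gridlock coloring of $K_n$ exists. Together these give $\sLO_k(K_n)$ equal to the $k$ consensus colorings, so $\LO_k(K_n)=k$ (equivalently $\SG_k(K_n)=0$). I will assume $n \ge 2$. For $n=1$ the single vertex has no neighbors, so by the convention stated above no coloring is locally optimal, and the count would be $0$ rather than $k$. I will flag this degenerate case explicitly rather than hide it.

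\emph{Consensus colorings are locally optimal.} Suppose every vertex gets color $i$. Each vertex $v$ has $n-1 \ge 1$ neighbors, all of color $i$. So color $i$ appears $n-1>0$ times among them and every other color appears $0$ times. Hence $i$ is the strict plurality color and $v$ shares it. This gives exactly $k$ locally-optimal colorings.

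\emph{No strict gridlock exists.} Suppose $c$ is locally optimal and uses $m \ge 2$ colors, with color classes of sizes $a_1,\dots,a_m \ge 1$. Consider a vertex $v$ in class $i$. Among its neighbors, color $i$ appears $a_i - 1$ times, since $v$ itself is excluded, and each other color $j$ appears $a_j$ times, since the graph is complete. Local optimality at $v$ requires $a_i - 1 > a_j$ for every $j \ne i$. Now choose $i$ so that $a_i$ is minimal among the class sizes, and pick any other class $j$, which exists because $m \ge 2$. Then $a_j \ge a_i > a_i - 1$, contradicting the requirement at any vertex of class $i$. Hence $m=1$.

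The only real subtlety is the counting of $v$'s own class as $a_i - 1$, which is what makes the minimal class fail. Beyond that the argument is a two-line pigeonhole-style observation, so I expect no genuine obstacle apart from stating the $n \ge 2$ hypothesis carefully.
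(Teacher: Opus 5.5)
Your proof is correct and matches the paper's: both take a smallest color class and observe that a vertex in it sees $a_i - 1$ neighbors of its own color against at least $a_i$ of some other color, so no strict gridlock exists. Your explicit check that the $k$ consensus colorings are locally optimal, and your observation that $n=1$ gives $\LO_k(K_1)=0$ under the paper's no-neighbor convention, are details the paper leaves implicit.
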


\begin{proof}
    Assume for the purpose of contradiction that $K_n$ has a gridlocked coloring with $n_1$ vertices of color $c_1$, $n_2$ vertices of color $c_2$, and so on. Without loss of generality, assume $n_1 \le n_2 \le n_3 \le \cdots$. Pick some vertex $v$ with color $c_1$. The vertex $v$ must have $n_1 - 1$ neighbors of color $c_1$, but it must also have $n_2$ neighbors of color $c_2$. Since $n_1 - 1 \le n_2$, $c_1$ cannot be the plurality color among neighbors of $v$, which is a contradiction. Therefore, $K_n$ has no gridlocked colorings, which means the only locally-optimal colorings are the $k$ consensus colorings.
\end{proof}

The following proposition lists a few more properties of the $\LO$-polynomial, whose proofs follow from the theorems in the next section.

\begin{proposition}
\label{thm:properties}
    Let $G$ be a nonempty connected graph. The following are true:
    \begin{enumerate}[label=(\alph*)]
        \item The degree of $\LO_k(G)$ is the maximum number of distinct colors possible in a gridlocked coloring of $G$. 
        \item The leading coefficient of $\LO_k(G)$ is the number of unique ways to partition the vertices of $G$ into the maximum number of parts such that each vertex is in the same part as a plurality of its neighbors.
        \item The constant term of $\LO_k(G)$ is always zero.
    \end{enumerate}
\end{proposition}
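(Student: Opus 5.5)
The plan is to write $\LO_k(G)$ as a sum over partitions of $V$, which makes all three claims visible at once. Call a set partition $\pi$ of $V$ \emph{admissible} if every vertex $v$ lies in the same block as a plurality of its neighbors. By this we mean the number of neighbors of $v$ in its own block strictly exceeds the number in any other block. A coloring $c$ is locally optimal exactly when the partition of $V$ into its nonempty color classes is admissible. The reason is that the plurality condition depends only on which neighbors share a color with which, not on the color names. Conversely, an admissible partition with $j$ blocks yields locally-optimal colorings precisely by assigning distinct colors to the blocks injectively, and there are $k(k-1)\cdots(k-j+1)$ ways to do this. Hence
\begin{equation*}
    \LO_k(G) = \sum_{j \ge 1} a_j \, k(k-1)\cdots(k-j+1),
\end{equation*}
where $a_j$ is the number of admissible partitions with exactly $j$ blocks. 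This is the polynomiality statement of \cref{thm:polynomial}, and I would prove it first if the later machinery is not yet available.

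For (a) and (b), note that the falling factorial $k(k-1)\cdots(k-j+1)$ is monic of degree $j$, and every $a_j \ge 0$. Let $m$ be the largest $j$ with $a_j > 0$. Then the top term $a_m k^m$ cannot be cancelled, so $\deg \LO_k(G) = m$ and the leading coefficient is $a_m$. Some admissible partition exists, since the one-block partition is admissible: $G$ is nonempty and connected, and assuming at least one edge, every vertex has a neighbor, all in its own block. So $m \ge 1$.

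Reading off this formula gives the claims. $m$ is the maximum number of blocks of an admissible partition, which is the maximum number of distinct colors in a locally-optimal coloring. Here I would interpret ``gridlocked'' in (a) as ``locally optimal''. When no strict gridlock exists, $m = 1$ and the count is that of a consensus coloring. The coefficient $a_m$ is exactly the number of partitions described in (b). For (c), every falling factorial with $j \ge 1$ has the factor $k$, so the constant term is $0$.

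The main obstacle is the degenerate case. If $G = K_1$, the single vertex has no neighbors and so admits no plurality. Then $\LO_k(G) = 0$, and (a) needs the convention that the degree of the zero polynomial matches. I expect ``nonempty'' is meant to exclude this, or at least guarantee an edge, so I would state the assumption that $G$ has an edge explicitly. The other point needing care is the bijection between colorings and pairs of an admissible partition and an injective labeling of its blocks. It must be checked that the plurality condition is invariant under renaming colors and that unused colors play no role. Both are immediate from the definition.
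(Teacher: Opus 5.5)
Your proposal is correct and is essentially the paper's proof: the paper establishes \cref{thm:partition}, $\LO_k(G) = \sum_{\pi \partitions G} k^{\underline{\card{\pi}}}$, via the same correspondence between colorings and partitions, and then reads off (a)--(c) just as you do. Your extra remarks make explicit two points the paper leaves implicit: that $a_j \ge 0$ together with monic falling factorials rules out cancellation of the top term, and that $K_1$ (where $\LO_k = 0$) must be excluded by reading ``nonempty'' as ``has an edge''.
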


\begin{corollary}\label{thm:minimal}
    If a connected graph $G$ has $n \ge 3$ vertices, then the degree of $\LO_k(G)$ is bounded above by $\lfloor \frac{n}{3} \rfloor$.
\end{corollary}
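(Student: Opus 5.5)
The plan is to reduce the degree bound to a statement about color classes, using \cref{thm:properties}(a). That result says the degree of $\LO_k(G)$ equals the maximum number of distinct colors in a gridlocked coloring of $G$. We read ``gridlocked'' here as ``locally-optimal'', so that a consensus coloring, which uses one color, also counts. It therefore suffices to show the following: in every locally-optimal coloring of a connected graph $G$ with $n \ge 3$ vertices, every nonempty color class has at least $3$ vertices. Then the number of distinct colors used is at most $n/3$, and since it is an integer, at most $\lfloor n/3 \rfloor$. The consensus case uses $1 \le \lfloor n/3 \rfloor$ colors because $n \ge 3$, so it needs no separate treatment.

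First step: every class has at least $2$ vertices. Since $G$ is connected with $n \ge 3$, every vertex $v$ has at least one neighbor. By the definition of a locally-optimal coloring, the color $c(v)$ is the plurality color among the neighbors of $v$. In particular $c(v)$ appears at least once among those neighbors, so $v$ has a neighbor of its own color.

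Second step: rule out a class of size exactly $2$. Suppose some color class is $\{u,v\}$. By the first step, $u$ and $v$ must be adjacent. Among the neighbors of $u$, the color $c(u)$ appears exactly once, namely at $v$. Plurality requires that $c(u)$ appear strictly more often than every other color, so every other color appears zero times among the neighbors of $u$. Hence $u$ has no neighbor except $v$, and symmetrically $v$ has no neighbor except $u$. Then $\{u,v\}$ is a connected component of $G$. Since $G$ is connected, this forces $n = 2$, contradicting $n \ge 3$.

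Together these steps show every color class has size at least $3$, which gives the bound. The only delicate point is the strictness built into ``plurality'': a tie is not allowed, and this is exactly what forces an isolated pair. Beyond that, the argument relies on \cref{thm:properties}(a) to identify the degree with the maximal number of colors, and I am assuming that result as stated.
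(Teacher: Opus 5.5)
Your proof is correct and follows essentially the same route as the paper: you show that every color class of a locally-optimal coloring has at least three vertices, then invoke \cref{thm:properties}(a) to bound the degree. Your handling of the size-two case is slightly more careful than the paper's ``without loss of generality'' step: you first force $u$ and $v$ to be adjacent, then use the strictness of plurality to isolate the pair. Your reading of ``gridlocked'' as ``locally-optimal'' also correctly covers graphs that admit only consensus colorings.
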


\begin{proof}
    We will show that any locally-optimal coloring of $G$ must assign each color to at least three vertices. By part (a) of \cref{thm:properties}, this gives us a bound on the degree of $\LO_k(G)$.
    
    Assume for the purpose of contradiction that some color $c$ is only assigned to one or two vertices. In the first case, if $c$ is only assigned to one vertex $v$, then by connectedness we know that $v$ has some other neighbor which must be assigned a different color. Therefore, $c$ cannot be the plurality color among the neighbors of $v$.
    
    In the second case, the color $c$ is only assigned to two vertices $v$ and $w$. Again, our connectedness assumption along with our assumption that $G$ has at least three vertices means that $v$ and/or $w$ must have another neighbor. Without loss of generality, assume that $v$ has another neighbor. Since this neighbor must have a color that isn't $c$, $c$ cannot be the majority color among the neighbors of $v$. Therefore, in both cases we get a contradiction, so each color must be assigned to at least three vertices.
\end{proof}

Given any degree $d$, one can realize this upper bound by constructing a connected graph $G$ with $3d$ vertices such that $\deg(\LO_k(G))=d$ as in 
\cref{fig:minimal}.

\begin{figure}[ht]
    \centering
    \includegraphics[width=0.5\linewidth]{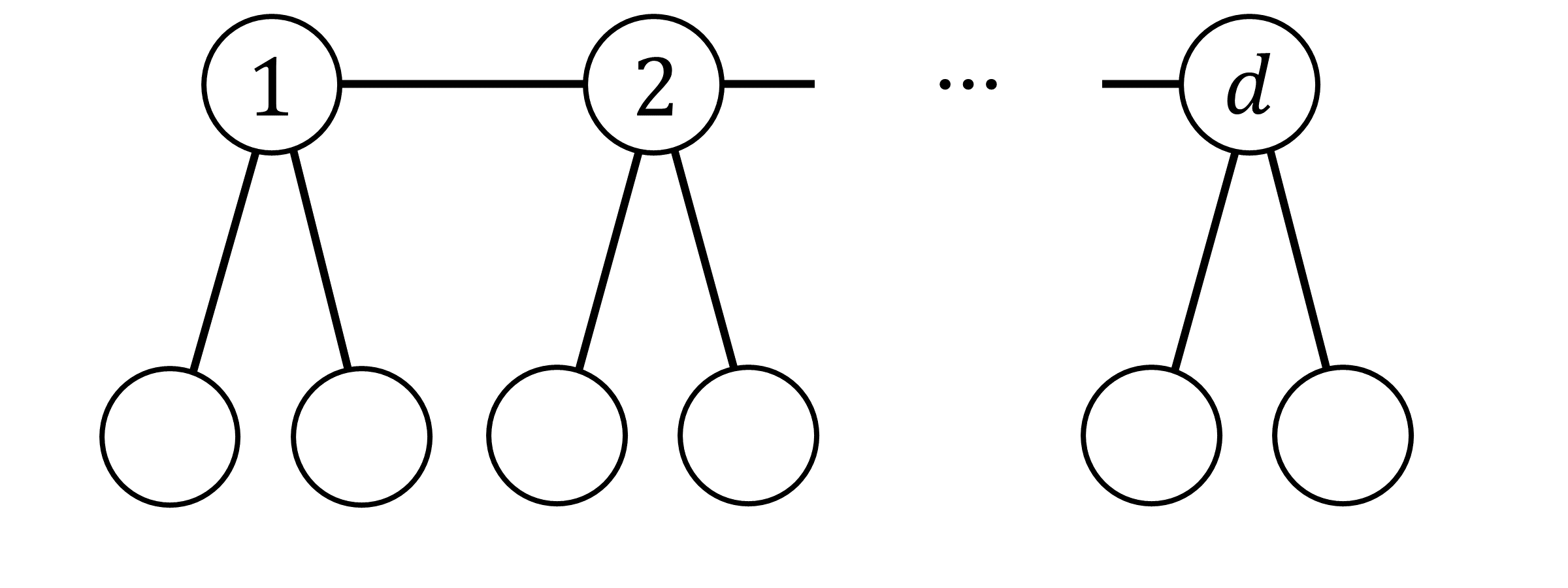}
    \caption{The edge-minimal connected graph with a specified $\LO$-polynomial degree $d$.}
    \label{fig:minimal}
\end{figure}

\section{Recurrence Algorithm}\label{sec:recurrence}

\subsection{Low-degree vertices}

In this section, we discuss a few properties of $\LO_k(G)$ involving low-degree vertices. We conclude by describing a recursive algorithm to compute the $\LO$-polynomial of graphs with maximum degree three entirely in terms of other such graphs.

\subsubsection{Degree one}

We refer to vertices of degree one as \vocab{leaves}, even if the graphs in question are not trees.

\begin{lemma}\label{thm:leaf}
    Let $G = (V,E)$ be a graph and let $v \in V$ be a leaf with unique neighbor $w$. Then any locally-optimal coloring $c$ of $G$ will satisfy $c(v) = c(w)$.
\end{lemma}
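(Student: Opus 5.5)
The plan is to argue directly from the definition of a locally-optimal coloring, applied at the leaf $v$ itself. By definition, $c$ is locally optimal exactly when every vertex shares its color with a plurality of its neighbors. In particular this holds at $v$, so the whole argument reduces to identifying what ``plurality'' means when the neighborhood has a single element.

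First, I will write down the multiset of colors on the neighbors of $v$. Since $v$ is a leaf whose unique neighbor is $w$, this multiset is just $\{c(w)\}$. The color $c(w)$ appears once and every other color appears zero times, so $c(w)$ appears strictly more often than any other color. Hence $c(w)$ is the plurality color among the neighbors of $v$, and it is the only color with that property.

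Next, I will invoke local optimality at $v$: the color $c(v)$ must be the plurality color among the neighbors of $v$. Since that plurality color is uniquely $c(w)$, we get $c(v) = c(w)$.

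I do not expect any real obstacle. The only point needing care is to read ``plurality'' in the strict sense fixed in the paper, namely some color appearing more often than any other. A one-element neighborhood always has a strict plurality, so this reading causes no difficulty here. The hypotheses on $w$, such as its own degree or its own local-optimality condition, are never used.
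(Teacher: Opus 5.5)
Your proposal is correct and matches the paper's proof: local optimality at $v$ forces $c(v)$ to be the plurality color on the one-element neighborhood $\{w\}$, and that color is $c(w)$. Your explicit check that this plurality is strict and unique simply spells out the step the paper describes as holding ``by default.''
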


\begin{proof}
    Let $c$ be a locally-optimal coloring of $G$. Since $v$ only has one neighbor, the color $c(w)$ assigned to $v$ is by default the plurality color among the neighbors of $v$, and thus $c(v)$ must equal $c(w)$.
\end{proof}

\subsubsection{Degree two}

We may refer to vertices of degree two as \vocab{bivalent vertices}.

\begin{lemma}\label{thm:bivalent}
    Let $G = (V,E)$ be a graph and let $v \in V$ be a bivalent vertex with neighbors $u$ and $w$. Then any locally-optimal coloring $c$ of $G$ will satisfy $c(u) = c(v) = c(w)$.
\end{lemma}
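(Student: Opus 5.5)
The plan is a direct case analysis on the colors of the two neighbors $u$ and $w$, using only the definition of a locally-optimal coloring and the paper's convention that ``plurality'' means some color appears \emph{strictly} more often than every other color among the neighbors. The argument mirrors the proof of \cref{thm:leaf}: first determine which colors can possibly be the plurality color at $v$, and then force $c(v)$ to equal it.

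Let $c$ be a locally-optimal coloring of $G$. The vertex $v$ has exactly two neighbors, so its neighborhood multiset of colors is $\{c(u), c(w)\}$. I first rule out $c(u) \neq c(w)$. In that case each of the two colors appears exactly once among the neighbors of $v$, so no color appears more often than every other. By the convention stated at the start of the section, there is then no plurality color at $v$. Hence $v$ cannot share a color with a plurality of its neighbors, which contradicts local optimality. Therefore $c(u) = c(w)$.

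With $c(u) = c(w)$, that common color appears twice among the neighbors of $v$ and every other color appears zero times, so it is the unique plurality color. Local optimality at $v$ then forces $c(v) = c(u) = c(w)$, as claimed.

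There is no real obstacle here. The only point needing care is the tie case $c(u) \neq c(w)$, where one must invoke the strict reading of ``plurality'' (a tie yields no plurality) rather than allowing $v$ to match either neighbor. This is the same convention under which a vertex with no neighbors has no plurality color, so I will cite it explicitly. I will also note that loops and multi-edges are not in play: $u \neq w$ as distinct neighbors, and if $u = w$ the vertex would not have degree two in a simple graph.
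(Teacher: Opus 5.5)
Your proof is correct and follows the same route as the paper's proof. You rule out $c(u) \neq c(w)$ because a tie leaves no plurality color at $v$, and then conclude that the common color $c(u)=c(w)$ is the plurality color and so must equal $c(v)$, with your explicit appeal to the strict reading of ``plurality'' making precise what the paper uses implicitly.
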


\begin{proof}
    Let $c$ be a locally-optimal coloring of $G$. If $c(u) \not= c(w)$, then there is no plurality color among the neighbors of $v$, which contradicts our assumption that $c$ is locally-optimal. Therefore, $c(u) = c(w)$, and thus $c(u)$ is the plurality color among the neighbors of $v$, so $c(u) = c(v)$.
\end{proof}

One consequence of \cref{thm:bivalent} is that it lets us describe the effect of edge subdivision on $\LO_k(G)$.

\begin{corollary}\label{thm:subdivision}
Let $G = (V,E)$ be a graph, and let $uw$ be an edge of $G$. Consider the graph $G'=(V',E')$ obtained from $G$ by subdividing the edge $uw$, creating a new vertex $v$ and replacing $uw$ by two new edges $uv$ and $vw$. Then $\LO_k(G')$ counts the number of $k$-colorings of $G$ such that $u$ and $w$ have the same color.
\end{corollary}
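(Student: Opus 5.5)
The plan is to build an explicit bijection between $\sLO_k(G')$ and the set of locally-optimal $k$-colorings $c$ of $G$ with $c(u) = c(w)$. I read the statement as counting \emph{locally-optimal} colorings of $G$ with this property, since that is the only reading under which it parallels the rest of the section. The forward map is restriction: given $c' \in \sLO_k(G')$, let $c = c'|_V$. The inverse is extension: given such a $c$ on $G$, define $c'$ on $V' = V \union \{v\}$ by $c'(v) = c(u) = c(w)$. These two maps are clearly mutually inverse once we check that each lands in the right set.

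For the forward direction, let $c' \in \sLO_k(G')$. The new vertex $v$ is bivalent in $G'$ with neighbors $u$ and $w$, so \cref{thm:bivalent} gives $c'(u) = c'(v) = c'(w)$. In particular the restriction $c$ satisfies $c(u) = c(w)$. It remains to check that $c$ is locally-optimal on $G$, which rests on one observation. For every vertex $x \in V$, the multiset of colors on the neighbors of $x$ in $G$ equals the multiset of colors (under $c'$) on the neighbors of $x$ in $G'$. We verify this by cases.
\begin{itemize}
\item If $x \notin \{u, w\}$, the neighborhoods are identical.
\item If $x = u$, the only change is that the neighbor $w$ is replaced by $v$, and $c'(v) = c'(w)$.
\item If $x = w$, the neighbor $u$ is replaced by $v$, and $c'(v) = c'(u)$.
\end{itemize}
Hence the plurality condition at each $x \in V$ transfers verbatim from $G'$ to $G$.

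For the inverse direction, take a locally-optimal $c$ on $G$ with $c(u) = c(w)$ and extend it as above. The same multiset argument, now using $c'(v) = c(w) = c(u)$, shows that every $x \in V$ still has its color as the plurality among its $G'$-neighbors. At the vertex $v$ itself, both neighbors $u$ and $w$ share the color $c(u) = c'(v)$. That color is therefore the strict plurality among the neighbors of $v$, so $v$ is satisfied.

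There is no real obstacle here. The only point requiring care is the neighborhood-multiset bookkeeping at $u$ and $w$. We must note that subdivision preserves their degrees, because $uw$ is swapped for $uv$ or $vw$, and that the replacement neighbor $v$ carries the same color as the removed one. This is exactly what \cref{thm:bivalent} guarantees in the forward direction and what the definition of the extension guarantees in the inverse direction.
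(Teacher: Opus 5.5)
Your proposal is correct and follows the paper's proof: the same extension/restriction bijection between $\sLO_k(G')$ and the locally-optimal colorings of $G$ with $c(u)=c(w)$, with \cref{thm:bivalent} forcing $c'(u)=c'(v)=c'(w)$. Your explicit check that the neighborhood color multisets at $u$ and $w$ are unchanged is a useful addition. The paper only asserts local optimality on $V$ ``by assumption'' in the extension direction, and does not verify it at all in the restriction direction.
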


\begin{proof}
Let $S \subseteq \sLO_k(G)$ be the set of locally-optimal $k$-colorings $c$ of $G$ such that $c(u) = c(w)$. We will describe a bijection between $S$ and the set $\sLO_k(G')$ of locally-optimal $k$-colorings of $G'$.

First, let $c \in S$ be a coloring of $G$ such that $c(u) = c(w)$. We can extend $c$ to a coloring $c'$ of $G'$ by letting $c'(a) = c(a)$ for all $a \in V$, and letting $c'(v) = c(u)$. This coloring is locally-optimal at every vertex $a \in V$ by assumption. This coloring is also locally optimal at $v$ since $v$ has the same color as both of its neighbors $u$ and $w$. Therefore, this extension $c'$ is a valid locally-optimal $k$-coloring of $G'$.

In the other direction, assume that $c'$ is a locally-optimal $k$-coloring of $G'$. Any such coloring of $G'$ must satisfy the condition that $c'(u) = c'(v) = c'(w)$ by \cref{thm:bivalent}. Therefore, restricting $c'$ to the vertices of $G$ gives us a coloring $c$ of $G$ such that $c(u) = c(w)$.
\end{proof}

Graphs whose vertices have sufficiently-many bivalent neighbors have particularly simple $\LO$-polynomials. We call a graph $G$ \vocab{bivalent-dense} if a majority of the neighbors of each non-bivalent vertex are bivalent. The following lemma tells us that bivalent-dense graphs have monomials for their LO-polynomials.

\begin{lemma}\label{thm:base-case}
    Let $G$ be a bivalent-dense graph, and let $I \subseteq G$ be the subgraph induced by the set of edges incident to at least one bivalent vertex. If $I$ contains $p$ connected components, then
    \begin{equation*}
        \LO_k(G) = \LO_k(I) = k^p \,.
    \end{equation*}
\end{lemma}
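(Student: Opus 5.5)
The plan is to show that the locally-optimal colorings of $G$ are exactly the colorings that are constant on each connected component of $I$, and to establish the same characterization for $I$ itself. Each of the $p$ components then gets one free choice of color, which gives $k^p$ in both cases.

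First I would check that $I$ spans $G$, i.e.\ every vertex of $G$ lies in $I$. Bivalent vertices obviously do. If $v$ is non-bivalent with $\deg v \ge 1$, then bivalent-density gives $v$ a strict majority of bivalent neighbors, so in particular at least one, and hence $v$ is incident to an edge of $I$. An isolated vertex would have to satisfy ``a majority of its zero neighbors are bivalent'', which fails. So bivalent-dense graphs have no isolated vertices, and $V(I)=V(G)$. Consequently a coloring constant on components of $I$ is determined by $p$ independent color choices.

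\emph{Necessity for $G$.} Let $c$ be locally-optimal on $G$. By \cref{thm:bivalent}, every bivalent vertex has the same color as both of its neighbors. Thus every edge of $I$ is monochromatic, and so $c$ is constant on each component of $I$. This gives $\LO_k(G)\le k^p$.

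\emph{Sufficiency for $G$.} Suppose $c$ is constant on components of $I$. A bivalent vertex $v$ is joined to both of its neighbors by edges of $I$, so all three share a color, and that color is the plurality among the neighbors of $v$. A non-bivalent vertex $v$ is joined by edges of $I$ to all of its bivalent neighbors, so they all carry the color $c(v)$. These neighbors form a strict majority of the neighbors of $v$, so $c(v)$ occurs more often than all other colors combined. Hence $c(v)$ is the unique plurality color, $c$ is locally-optimal, and $\LO_k(G)=k^p$.

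\emph{The graph $I$.} I expect this to be the main obstacle, because $I$ need not itself be bivalent-dense in the sense of the definition. A vertex that is non-bivalent in $G$ keeps only its bivalent neighbors in $I$, so it may end up with degree $1$ or $2$ there. I would therefore argue directly rather than reuse the argument above.
\begin{itemize}
\item Vertices that are bivalent in $G$ keep both of their edges, so they have degree exactly $2$ in $I$. By \cref{thm:bivalent} applied to $I$, every locally-optimal coloring of $I$ makes each edge of $I$ monochromatic, so it is constant on components.
\item Conversely, every vertex of $I$ has degree at least $1$. So a coloring constant on components gives each vertex neighbors that all share its own color, which is trivially the plurality.
\end{itemize}
Hence $\LO_k(I)=k^p$ as well.
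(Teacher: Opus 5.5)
Your proof is correct and follows the same route as the paper: \cref{thm:bivalent} forces every edge of $I$ to be monochromatic, which gives $\LO_k(G)\le k^p$, and conversely any coloring constant on the components of $I$ is locally optimal because each non-bivalent vertex has a strict majority of its neighbors in its own component. Your check that $V(I)=V(G)$ (no isolated vertices) and your separate treatment of $\LO_k(I)$ fill in points the paper leaves implicit.

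One small correction to your aside: $I$ is in fact always bivalent-dense, so the $G$-argument could be reused verbatim for $I$, though your direct argument is equally valid. To see this, note that a vertex that is non-bivalent in $I$ must be non-bivalent in $G$. All of its $I$-neighbors are therefore bivalent in $G$ and keep degree $2$ in $I$, and every edge of $I$ still touches a bivalent vertex of $I$.
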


\begin{proof}
    Let $S$ be the set of connected components of the edge-induced subgraph $I$. Since each component is internally ``connected'' by bivalent vertices, each vertex within a connected component must have the same color as all the other members of its component. Therefore, $\LO_k(G)$ is bounded above by the number of functions $S \to [k]$, which is $k^p$. Furthermore, we can see that every function $f: S \to [k]$ gives us a valid locally-optimal coloring $c$ of $G$. Define the coloring $c$ induced by $f$ on each vertex $v$ to be such that $c(v)$ is the color that $f$ assigns to the component containing $v$. By assumption, a majority of the neighbors of each non-bivalent vertex are bivalent, so a majority of the neighbors of $v$ are in the same component as $v$. Therefore, any assignment of colors to the components $S$ is a valid locally-optimal coloring, so $\LO_k(G) = k^p$.
\end{proof}

\subsubsection{Degree three}

Next, we will use \cref{thm:subdivision} to prove a local recurrence relation for vertices of degree three, which we may refer to as \vocab{trivalent vertices}.

\begin{theorem}\label{thm:trivalent}
Let $G$ be a graph, and consider a trivalent vertex $v$ of $G$ with neighbors $a$, $b$, and $c$. Let $G_{ab}$ denote the graph obtained from $G$ by subdividing the edges $va$ and $vb$, and define the graphs $G_{ac}$, $G_{bc}$ and $G_{abc}$ analogously. Then
    \begin{equation*}
        \LO_k(G) = \LO_k(G_{ab}) + \LO_k(G_{ac}) + \LO_k(G_{bc}) - 2 \LO_k(G_{abc}) \,.
    \end{equation*}
\end{theorem}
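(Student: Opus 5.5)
The approach is inclusion--exclusion on which neighbors of $v$ share its color, with \cref{thm:subdivision} translating each subdivided graph into a count of colorings of $G$. The key observation is that a trivalent vertex $v$ is locally optimal exactly when at least two of its three neighbors share a color and $v$ takes that color. For a coloring $c$ of $G$, write $E_{ab}$ for the event $c(v)=c(a)=c(b)$, and define $E_{ac}$ and $E_{bc}$ analogously. Local optimality at $v$ is then exactly $E_{ab}\cup E_{ac}\cup E_{bc}$. To see this, note that if two neighbors share a color, that color has count at least $2$ among the three neighbors and is therefore the strict plurality. If all three neighbors are distinct, no color has a plurality.

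Next I interpret each $\LO_k(G_S)$ via the argument of \cref{thm:subdivision}. Subdividing the edges $va$ and $vb$ creates bivalent vertices that, by \cref{thm:bivalent}, force $c(v)=c(a)$ and $c(v)=c(b)$. The subtle point is that in $G_{ab}$ the vertex $v$ still has degree three, but its neighbors are now the subdivision vertices $x_a$, $x_b$ together with $c$. Since $x_a$ and $x_b$ carry the colors of $a$ and $b$, the local-optimality condition at $v$ in $G_{ab}$ coincides with the condition at $v$ in $G$; this is automatic here because $c(x_a)=c(x_b)=c(v)$. Similarly, every other vertex keeps the multiset of neighbor colors it had in $G$: $a$ sees $x_a$ in place of $v$, with the same color. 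Restricting to $V(G)$ therefore gives a bijection between $\sLO_k(G_{ab})$ and the set of $c\in\sLO_k(G)$ satisfying $E_{ab}$. I will verify this directly rather than cite the corollary verbatim, since the corollary treats a single subdivision. Likewise $\LO_k(G_{abc})$ counts the $c\in\sLO_k(G)$ satisfying $c(v)=c(a)=c(b)=c(c)$. In $G_{abc}$ all three neighbors of $v$ are bivalent, and they force $c(v)$ to equal all three neighbor colors.

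The final step is to apply inclusion--exclusion to the colorings of $G$ that are locally optimal at every vertex other than $v$. Any two of the events intersect in the event that all four colors agree, as does the intersection of all three, so
\[
\lvert E_{ab}\cup E_{ac}\cup E_{bc}\rvert = \sum \lvert E_{xy}\rvert - 3\lvert E_{abc}\rvert + \lvert E_{abc}\rvert,
\]
which yields the stated formula with coefficient $-2$.

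The main obstacle is the bookkeeping in the second step. I must check that subdividing an edge changes no vertex's local-optimality condition beyond forcing the equality, including the case where $a$ and $b$ are adjacent or share other neighbors. This holds because subdivision only replaces $v$ by a same-colored proxy in each neighborhood.
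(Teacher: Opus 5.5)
Your proposal is correct and follows essentially the same route as the paper. Like the paper, you write local optimality at $v$ as the union of the three events that $v$ agrees with a pair of its neighbors, and you identify each event with $\sLO_k(G_S)$ via subdivision. You then note that every pairwise and triple intersection is the all-equal event counted by $\sLO_k(G_{abc})$, so inclusion--exclusion gives the coefficient $-3+1=-2$. Your direct check that subdividing both edges preserves every vertex's multiset of neighbor colors is a slightly more careful version of the paper's appeal to \cref{thm:subdivision}, not a different argument.
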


\begin{proof}
In order for the vertex $v$ to have a locally-optimal color, at least two of its three neighbors must have the same color. Each choice of two or three neighbors to share colors gives a different set of locally-optimal colorings, and \cref{thm:subdivision} tells us that we can count them by subdividing those edges:
\begin{equation*}
    \sLO_k(G) = \sLO_k(G_{ab}) \union \sLO_k(G_{ac}) \union \sLO_k(G_{bc}) \,.
\end{equation*}
Here, we are implicitly identifying colorings of graphs obtained from $G$ by subdivision with colorings of $G$. The proof of \cref{thm:subdivision} justifies why these colorings are still valid when restricted to the vertices of $G$.

However, if an edge is not subdivided, the two vertices at the ends can still have the same color in a locally-optimal coloring. To count the true number of colorings in this union of three sets, we apply the inclusion-exclusion principle to avoid over-counting the the colorings in which all three neighbors have the same color as $v$:
\begin{align*}
    \card{\sLO_k(G)} &= \card{\sLO_k(G_{ab}) \union \sLO_k(G_{ac}) \union \sLO_k(G_{bc})} \\
    &= \card{\sLO_k(G_{ab})} + \card{\sLO_k(G_{ac})} + \card{\sLO_k(G_{bc})} - 2\card{\sLO_k(G_{abc})} \\
    \LO_k(G) &= \LO_k(G_{ab}) + \LO_k(G_{ac}) + \LO_k(G_{bc}) - 2 \LO_k(G_{abc}) \,. \qedhere
\end{align*}
\end{proof}

Note that, in each term on the right-hand side of the above equation, a majority of the neighbors of the vertex $v$ are bivalent. Therefore, iteratively applying \cref{thm:trivalent} at every vertex of $G$ results in a linear combination of LO-polynomials of bivalent-dense graphs. This establishes a recursive algorithm for computing $\LO_k(G)$ for graphs of maximum degree at most three.

\begin{figure}
    \centering
    \includegraphics[width=\linewidth]{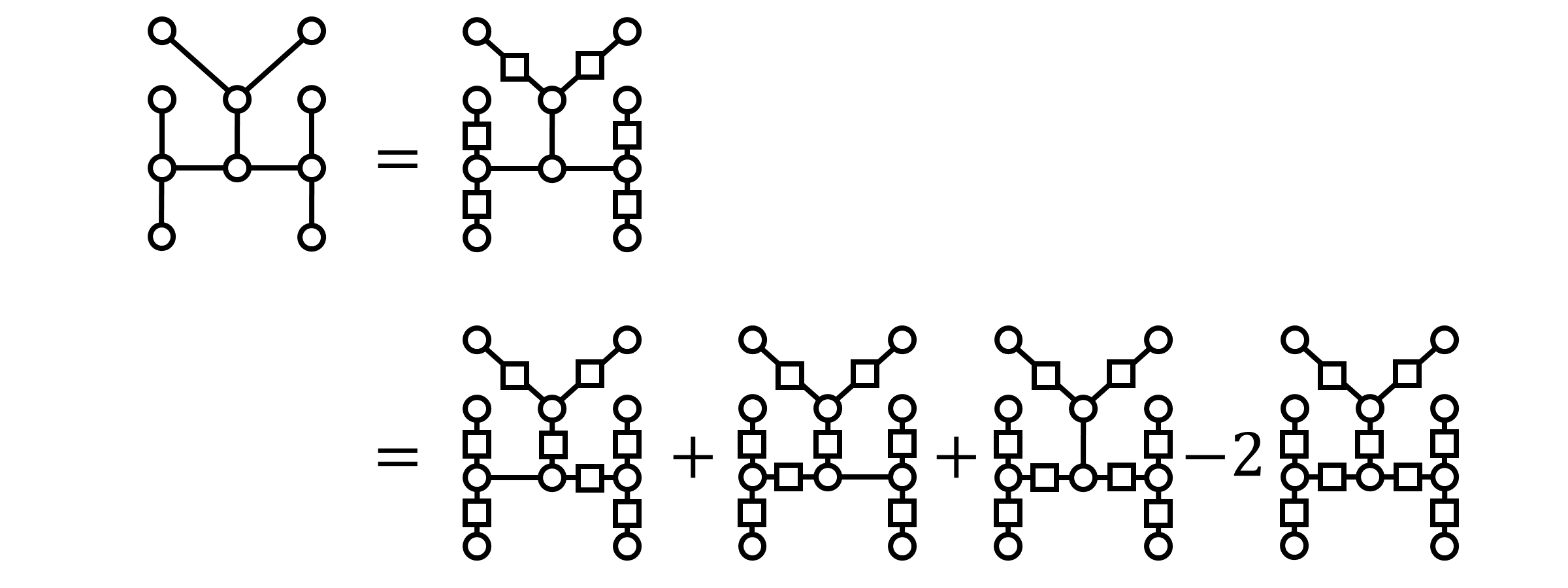}
    \caption{Demonstration of computing the locally-optimal polynomial for a graph with a low maximum degree. Bivalent vertices are represented by squares. First, we subdivide the edges connected to leaves, and then we apply \cref{thm:trivalent} on the central vertex. All graphs in the bottom row are bivalent dense and therefore have easily accessible LO polynomials of $k^2$ or $k$.}
    \label{fig:deg_three_recur}
\end{figure}

\subsection{General graphs}\label{sec:general-graphs}

First, we note that there is a straightforward generalization of \cref{thm:trivalent} to vertices of any degree $n \ge 3$.

\begin{theorem}\label{thm:any-degree}
    Let $G$ be a graph, and consider an $n$-valent vertex $v$ of $G$ with neighbor set $N(v) = \{w_1, w_2, \dots, w_n\}$. For any subset $S \subseteq N(v)$, let $G_S$ be the graph obtained from $G$ by subdividing each edge $vw$ for $w \in S$. Then
    \begin{equation*}
         \LO_k(G) = \sum_{j=2}^n (-1)^j (j-1) \sum_{\substack{S \subseteq N(v) \\ \card{S} = j}} \LO_k(G_S) \,.
    \end{equation*}
\end{theorem}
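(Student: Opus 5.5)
The plan is to reinterpret each $\LO_k(G_S)$ as counting a subset of $\sLO_k(G)$, and then show that every locally-optimal coloring of $G$ is counted with total weight exactly one on the right-hand side. The first step extends \cref{thm:subdivision} from one edge to a set of edges at $v$.

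\textbf{Step 1: what $\LO_k(G_S)$ counts.} I would prove that
\begin{equation*}
\LO_k(G_S) = \card{\{c \in \sLO_k(G) : c(w) = c(v) \text{ for all } w \in S\}} .
\end{equation*}
In $G_S$, each new subdivision vertex on an edge $vw$ with $w\in S$ is bivalent. By \cref{thm:bivalent}, any locally-optimal coloring of $G_S$ gives it the common color $c(v)=c(w)$.

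Restricting to $V$ therefore gives a coloring of $G$ with $c(w)=c(v)$ for all $w\in S$. It is still locally-optimal at every vertex, as in the proof of \cref{thm:subdivision}. At $v$ the multiset of neighbor colors is unchanged, because each subdivision vertex carries the color of the original neighbor $w$. The same holds at each $w \in S$.

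Conversely, any such coloring of $G$ extends uniquely to $G_S$ by coloring each subdivision vertex with $c(v)$. This extension is locally-optimal, since each new vertex agrees with both of its neighbors. Alternatively, this step could be done by applying \cref{thm:subdivision} one edge at a time, checking at each stage that the condition $c(w)=c(v)$ is preserved.

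\textbf{Step 2: swap the order of summation.} For $c\in\sLO_k(G)$, let $A(c)=\{w\in N(v): c(w)=c(v)\}$ and $m=\card{A(c)}$. By Step 1, the right-hand side of the theorem equals
\begin{equation*}
\sum_{c\in\sLO_k(G)} \; \sum_{j=2}^{n} (-1)^j (j-1)\binom{m}{j} .
\end{equation*}
So it suffices to show that this inner sum equals $1$ for every locally-optimal $c$.

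\textbf{Step 3: bound $m$.} Since $c$ is locally-optimal at $v$, the color $c(v)$ is the strict plurality color among the $n\ge 3$ neighbors of $v$. This forces $m\ge 2$. Indeed, $m=0$ is impossible because then $c(v)$ does not appear among the neighbors at all. And $m=1$ would require every other neighbor color to appear at most $0$ times, which is impossible because $n\ge 2$.

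\textbf{Step 4: the binomial identity.} For $m\ge 2$, both $\sum_{j=0}^m(-1)^j\binom{m}{j}=0$ and $\sum_{j=0}^m(-1)^j j\binom{m}{j}=0$. Hence
\begin{equation*}
\sum_{j=0}^m(-1)^j(j-1)\binom{m}{j}=0 .
\end{equation*}
The $j=0$ term is $-1$ and the $j=1$ term is $0$, so
\begin{equation*}
\sum_{j=2}^m(-1)^j(j-1)\binom{m}{j}=1 .
\end{equation*}
Terms with $j>m$ vanish, so the sum up to $n$ also equals $1$. Summing over $c\in\sLO_k(G)$ then gives $\LO_k(G)$, which proves the theorem.

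The main obstacle I expect is Step 1, specifically making sure that subdividing several edges at $v$ simultaneously does not change whether $v$ itself is locally-optimal. The key observation is that the color multiset seen by $v$ is preserved. The rest is a routine inclusion–exclusion computation.
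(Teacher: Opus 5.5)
Your proof is correct and takes the same approach as the paper: both show that a locally-optimal coloring in which $v$ agrees with exactly $m \ge 2$ neighbors is counted on the right-hand side with total weight $\sum_{j=2}^{m}(-1)^j(j-1)\binom{m}{j}=1$. Your version is somewhat more complete in two places the paper leaves implicit. You derive this binomial identity, which the paper only states in closed form. You also justify that $\LO_k(G_S)$ counts exactly the locally-optimal colorings of $G$ in which every $w\in S$ shares $v$'s color, including local optimality at $v$ and at each $w\in S$ after restriction.
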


\begin{proof}
    By assumption that $n \ge 2$, $v$ must share a color with at least two neighbors in any locally-optimal coloring of $G$. Therefore, we can write the set of locally-optimal colorings of $G$ as a union of sets in which two of the edges incident to $v$ are subdivided:
    \begin{equation*}
        \sLO_k(G) = \bigunion_{\substack{S \subseteq N(v) \\ \card{S} = 2}} \sLO_k(G_S) \,.
    \end{equation*}
    As before, we are implicitly identifying colorings of graphs obtained from $G$ by subdivision with colorings of $G$. In order to translate the above into an equation about the $\LO$-polynomials, we can use an inclusion-exclusion argument to account for the overlap of $\sLO_k(G_S)$ for different choices of $S$.

    Consider the sum of the $\LO$-polynomials corresponding to each way to choose two edges to subdivide; this quantity counts all valid colorings of $G$, but also over-counts colorings in which three neighbors have the same color as $v$:
    \begin{equation*}
        \LO_k(G) = \sum_{\substack{S \subseteq N(v) \\ \card{S} = 2}} \LO_k(G_S) - \dots \,.
    \end{equation*}
    Note that if exactly three neighbors share a color with $v$, then these colorings are triply-counted. We can partially correct this issue by removing two copies of $\LO_k(G_S)$ for each choice of $\card{S} = 3$:
    \begin{equation*}
        \LO_k(G) = \sum_{\substack{S \subseteq N(v) \\ \card{S} = 2}} \LO_k(G_S) - 2 \sum_{\substack{S \subseteq N(v) \\ \card{S} = 3}} \LO_k(G_S) + \dots \,.
    \end{equation*}
    However, now we are counting each coloring with 4 same-colored neighbors a total of $\binom{4}{2} - 2 \binom{4}{3} = -2$ times, so we should add these colorings back in three times:
    \begin{equation*}
        \LO_k(G) = \sum_{\substack{S \subseteq N(v) \\ \card{S} = 2}} \LO_k(G_S) - 2 \sum_{\substack{S \subseteq N(v) \\ \card{S} = 3}} \LO_k(G_S) + 3 \sum_{\substack{S \subseteq N(v) \\ \card{S} = 4}} \LO_k(G_S) - \dots \,.
    \end{equation*}
    In general, if a locally-optimal $k$-coloring of $G$ assigns $v$ the same color as exactly $j$ of its neighbors, then the above alternating sum will count this coloring
    \begin{equation*}
        \sum_{i=2}^{j-1} (-1)^i(i-1) \binom{j}{i}
    \end{equation*}
    times in previous terms of the sum. This sum has the simpler closed form
    \begin{equation*}
        1-(-1)^j(j-1) \,.
    \end{equation*}
    Therefore, to correct for this, we should add back in $\LO_k(G_S)$ exactly 
    \begin{equation*}
        1 - (1-(-1)^j(j-1)) = (-1)^j(j-1)
    \end{equation*}
    times in each term of the right-hand side.
\end{proof}

\begin{example}\label{ex:degree-four}
    As before, let $G_{abc\dots}$ denote the graph obtained from $G$ by subdividing the edges $va$, $vb$, $vc$, and so on. When $n=4$, \cref{thm:any-degree} gives us the formula
    \begin{align*}
        \LO_k(G) &= \big(\LO_k(G_{ab}) \begin{aligned}[t]
            &+ \LO_k(G_{ac}) + \LO_k(G_{ad}) \\
            &+ \LO_k(G_{bc}) + \LO_k(G_{bd}) + \LO_k(G_{cd})\big)
        \end{aligned} \\
        &- 2\big(\LO_k(G_{abc}) + \LO_k(G_{abd}) + \LO_k(G_{acd}) + \LO_k(G_{bcd})\big) \\
        &+ 3\LO_k(G_{abcd})
    \end{align*}
\end{example}

\begin{figure}
    \centering
    \includegraphics[width=0.75\linewidth]{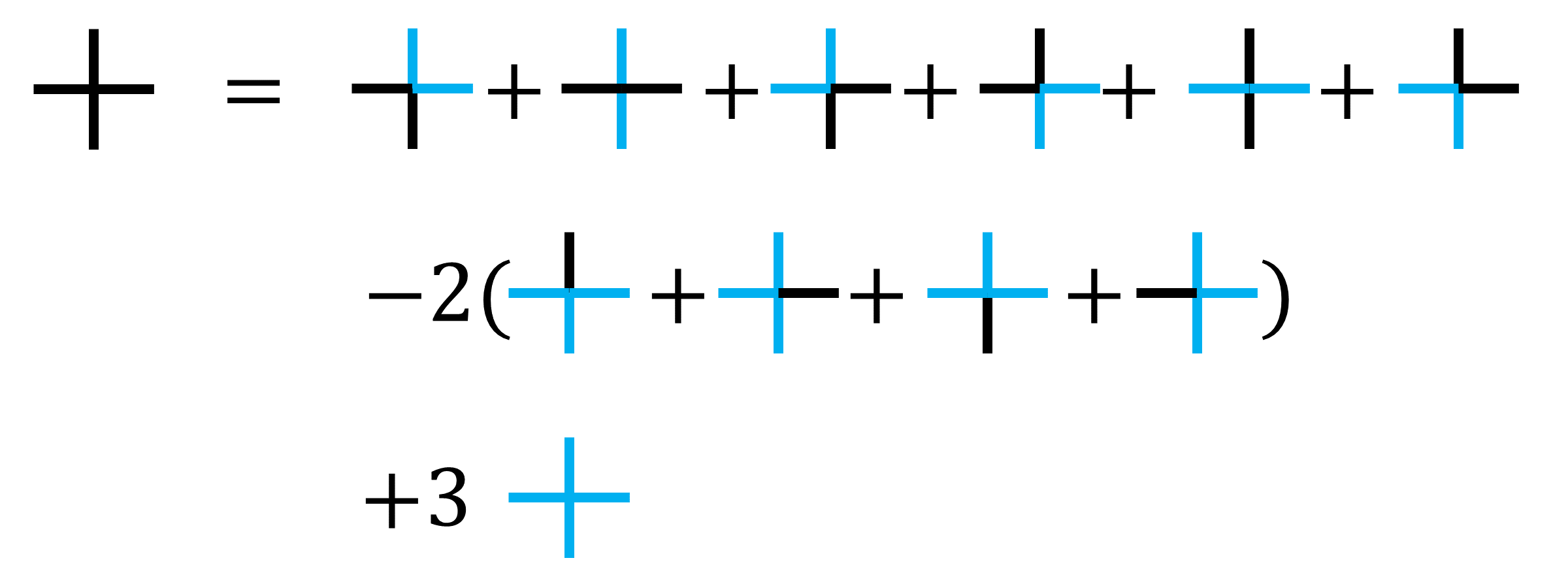}
    \caption{Demonstration of \cref{thm:any-degree} where $v$ is a degree four vertex. Each symbol shows the four edges connected to $v$ and represents the LO-polynomial of the graph. Initially, we have no information about which neighbors $v$ shares a color with, but \cref{thm:any-degree} tells us how we can express this LO-polynomial in terms of other LO-polynomials where certain edges are subdivided, represented by blue.}
    \label{fig:degree_four_recur_1}
\end{figure}

Note that unlike our previous theorem for trivalent vertices, the above \cref{thm:any-degree} does not produce bivalent-dense graphs when $n \ge 4$. Each term in the right-hand side is only guaranteed to have at least two bivalent neighbors for the chosen vertex $v$. Therefore, more simplification is needed to fully compute the LO-polynomial.

To assist in calculating the LO-polynomial of general graphs, we will allow our graph to contain \vocab{non-voting edges}. These are edges that do not count when determining the plurality color among the neighbors of a vertex, but force their endpoints to have the same color in any LO-coloring. We can think of these as extra data on top of the usual graph structure. These non-voting edges do not count towards the degree of a vertex, and we ignore these edges for the purposes of determining whether or not a graph is bivalent dense; they are simply added to restrict the possible LO-colorings of a graph. With these non-voting edges, we can now state the other half of the recurrence for calculating LO-polynomials.

\begin{theorem}\label{thm:recurrence}
    Let $G$ be a graph, and consider an $n$-valent vertex $v$ of $G$ with $b$ bivalent neighbors, where $2 \le b \le \frac{n}{2}$. Let $W$ denote the set of all non-bivalent neighbors of $v$, and let $\powerset(W, b)$ denote the set of all $b$-element subsets of $W$. Let $\ell = n - 2b + 1$. Given a set $S$ of non-bivalent neighbors of $v$ and a set $\mathcal{T}$ of sets of non-bivalent neighbors of $v$, let $G^\ell_{S,\Tset}$ denote the graph obtained from $G$ by adding $\ell$ leaves to the vertex $v$, subdividing each edge $vw$ for $w \in S$, and adding non-voting edges $t_1t_2$ for all $t_1,t_2 \in T$ for each set $T \in \Tset$. Then
    \begin{align*}
        \LO_k(G) &= \LO_k(G^\ell_{\emptyset,\emptyset}) \\
        &- \left( \sum_{r=1}^{n-b + 2^{n-b}} (-1)^{r+1} \sum_{\substack{W' \subseteq W \\
        \Tset \subseteq \powerset(W, b) \\ \card{W'} + \card{\Tset} = r}} \LO_k(G^\ell_{W', \Tset}) \right) \\
        &+ \sum_{j=1}^{n-b} (-1)^{j+1} \sum_{\substack{W' \subseteq W \\ \card{W'} = j}}\LO_k(G^0_{W',\emptyset}) \,.
    \end{align*}
\end{theorem}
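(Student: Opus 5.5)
The plan is to read the three terms on the right-hand side as counts of colorings of $G$ under various constraints, and to reduce the identity to a single set equality. Throughout I identify colorings of the modified graphs with colorings of $G$, as in the proofs of \cref{thm:trivalent} and \cref{thm:any-degree}. Leaves are handled by \cref{thm:leaf}, subdivided edges by \cref{thm:subdivision}, and non-voting edges by their definition.

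\textbf{Step 1: the leaves switch off the condition at $v$.} Let $D$ be the set of $k$-colorings of $G$ that are locally optimal at every vertex except possibly $v$. I claim $\LO_k(G^\ell_{\emptyset,\emptyset}) = \card{D}$.
\begin{itemize}
\item By \cref{thm:bivalent}, the $b$ bivalent neighbors of $v$ share $v$'s color.
\item By \cref{thm:leaf}, the $\ell$ new leaves also share $v$'s color, and each leaf is automatically locally optimal.
\item Hence $v$ sees at least $b+\ell = n-b+1$ neighbors of its own color. It sees at most $n-b$ neighbors of any other color, namely those in $W$.
\item So the condition at $v$ holds automatically in $G^\ell$. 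Adding leaves changes no other vertex's neighborhood, so the conditions elsewhere are exactly those of $G$.
\end{itemize}
The hypothesis $b \le n/2$ guarantees $\ell \ge 1$.

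\textbf{Step 2: the two inclusion--exclusion sums are unions of events.} For $w \in W$ let $A_w$ be the event $c(w)=c(v)$. For $T \in \powerset(W,b)$ let $B_T$ be the event that $T$ is monochromatic.
\begin{itemize}
\item Subdividing $vw$ in $G^\ell$ imposes exactly $A_w$ on $D$. This is the argument of \cref{thm:subdivision}: the new bivalent vertex votes with $w$'s own color, and $v$'s condition is still vacuous.
\item The non-voting edges on $T$ impose exactly $B_T$.
\item Hence $\LO_k(G^\ell_{W',\Tset}) = \card{D \cap \bigcap_{w\in W'}A_w \cap \bigcap_{T\in\Tset}B_T}$.
\item By inclusion--exclusion, the bracketed sum equals $\card{D \cap U}$, where $U = \bigcup_w A_w \cup \bigcup_T B_T$. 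The total number of events is $\card{W}+\binom{n-b}{b} \le n-b+2^{n-b}$, so the upper limit on $r$ is harmless: larger $r$ contributes empty sums.
\item Likewise $\LO_k(G^0_{W',\emptyset})$ counts locally optimal colorings of $G$ satisfying $A_w$ for all $w\in W'$. So the last sum is $\card{\sLO_k(G) \cap \bigcup_w A_w}$.
\end{itemize}

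\textbf{Step 3: reduce to a set identity.} After Steps 1 and 2, the theorem is equivalent to
\[
\sLO_k(G) \setminus \textstyle\bigcup_w A_w \;=\; D \setminus U .
\]
Fix a coloring in $D$ in which no $w \in W$ shares $v$'s color.
\begin{itemize}
\item Then $v$'s color appears exactly $b$ times among its neighbors, coming from the bivalent neighbors.
\item Every other color appears only on vertices of $W$.
\item So $v$ is locally optimal iff no other color appears $b$ or more times in $W$.
\item That is equivalent to no $b$-subset of $W$ being monochromatic, i.e.\ no $B_T$ holding.
\end{itemize}
Since $\sLO_k(G) \subseteq D$, the identity follows, and rearranging gives the stated formula.

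\textbf{Main obstacle.} I expect the delicate step to be verifying Step 2 cleanly: that subdivision and non-voting edges in $G^\ell$ impose exactly the intended events on $D$ and change nothing else. This needs three checks:
\begin{itemize}
\item A subdivided edge $vw$ gives $w$ a bivalent neighbor whose color equals $c(v)$, so $w$'s vote tally is unchanged exactly when $A_w$ holds.
\item Non-voting edges do not affect any plurality.
\item $v$'s condition remains vacuous in every $G^\ell_{W',\Tset}$.
\end{itemize}
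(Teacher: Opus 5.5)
Your proposal is correct and follows the paper's proof essentially step for step. The paper's set identity
\[
\sLO_k(G) = \sLO_k(G^\ell_{\emptyset,\emptyset}) \smallsetminus \Bigl(\bigcup_{w} \sLO_k(G^\ell_{\{w\},\emptyset}) \cup \bigcup_{T} \sLO_k(G^\ell_{\emptyset,\{T\}})\Bigr) \cup \bigcup_{w} \sLO_k(G^0_{\{w\},\emptyset}),
\]
followed by inclusion--exclusion, is exactly your decomposition into $D$, $U$ and $\bigcup_w A_w$. Your version is more careful than the paper's in two respects: you verify this identity and the disjointness of the two pieces explicitly, and you use the stated $\ell = n-2b+1$, which is exactly the threshold $b+\ell > n-b$, whereas the paper's proof text uses $n-2b+2$ (either value works).
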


\begin{proof}
    We will first prove the analogous equality involving sets of LO-colorings, and then translate this into a statement about LO-polynomials. To begin, note that our chosen vertex $v$ has $n$ neighbors, $b$ of which must have the same color as $v$ in any LO-coloring of $G$. Since $b < \frac{n}{2}$, we know that these neighbors are not sufficient for a majority, since that would require $\ceiling{\frac{n+1}{2}}$ same-color neighbors. However, if we add $\ell = n - 2b + 2$ leaves to the vertex $v$, then in the new graph $v$ has degree $2n - 2b + 2 = 2(n-b+1)$, and has $n-b+2 = (n-b+1)+1$ edges that can be subdivided, which constitute a majority of the neighbors of $v$. Therefore, we are interested in the set
    \begin{align*}
        \sLO_k(G^\ell_{\emptyset,\emptyset}) \,.
    \end{align*}

    Once we add $\ell$ leaves to $v$, we have converted LO-colorings of $G$ in which $v$ shares a color with exactly $b$ neighbors into colorings in which $v$ shares a color with a majority of its neighbors, but we have also introduced many new colorings of $G^\ell$ that are not valid LO-colorings of $G$. Specifically, any colorings of $G^\ell$ in which $b$ or more neighbors in $W$ share a color with each other (can be connected by non-voting edges) but do not share a color with $v$ may not correspond to LO-colorings of $G$. Therefore, we would like to remove all colorings of $G^\ell$ in which some set $T$ of $b$ distinct neighbors are connected by non-voting edges from the colorings of $G^\ell$; this set is denoted
    \begin{align*}
        \bigunion_{\substack{T \subseteq W \\ \card{T} = b }} \sLO_k(G^\ell_{\emptyset, \{T\}}) \,.
    \end{align*}
    We also subtract all colorings of $G^\ell$ in which more than $b$ edges are subdivided, which constitute the set
    \begin{align*}
        \bigunion_{w \in W} \sLO_k(G^\ell_{\{w\},\emptyset}) \,.
    \end{align*}
    We then add back these colorings as long as they are also valid colorings of $G$ (without the additional leaves):
    \begin{align*}
    \bigunion_{w \in W} \sLO_k(G^0_{\{w\},\emptyset}) \,.
    \end{align*}
    Putting all this together, we get the equation
    \begin{align*}
        \sLO_k(G) &= \sLO_k(G^\ell_{\emptyset, \emptyset}) \\ 
        &\smallsetminus \left(\bigunion_{w \in W} \sLO_k(G^\ell_{\{w\},\emptyset}) \quad \union \bigunion_{\substack{T \subseteq W \\ \card{T} = b }} \sLO_k(G^\ell_{\emptyset, \{T\}})\right) \\
        &\union \left(\bigunion_{w \in W} \sLO_k(G^0_{\{w\},\emptyset})\right) \,.
    \end{align*}
    
    To count the number of elements in $\mathcal{LO}_k(G)$, we simply apply the inclusion-exclusion principle to this set equality. The index $r$ in the theorem counts the number of elements of
    \begin{align*}
        \bigunion_{w \in W} \sLO_k(G^\ell_{\{w\},\emptyset}) \quad \union \bigunion_{\substack{T \subseteq W \\ \card{T} = b }} \sLO_k(G^\ell_{\emptyset, \{T\}})
    \end{align*}
    that we are intersecting, as shown in the requirement $\card{W'} + \card{\Tset} = r$. Every element of $W'$ represents an edge attached to the central vertex $v$ that can be subdivided, and every set inside $\Tset$ is a collection of $b$ peripheral vertices that are connected by non-voting edges.
    
    The second nested sum counts colorings where no leaves are added, but an additional $j$ of $v$'s edges are subdivided, once again applying the inclusion-exclusion principle to the union
    \begin{equation*}
        \bigunion_{w \in W} \sLO_k(G^0_{\{w\},\emptyset}) \,. \qedhere
    \end{equation*}
\end{proof}

While \cref{thm:recurrence} is rather complicated, we can get more intuition about what is going on by considering specific small values of $n$.

\begin{example}\label{ex:degree-four-step-two}
    Let $G_{ab}$ denote the graph obtained from $G$ by subdividing the edges $va$ and $vb$, and let $G_{ab \mid cd}$ denote the graph obtained from $G_{ab}$ by adding a non-voting edge $cd$. When $n=4$ and $b=2$, \cref{thm:recurrence} specializes to
    \begin{align*}
        \LO_k(G_{ab}) &= \LO_k(G^1_{ab}) \\
        &- \big( \LO_k(G^1_{abc}) + \LO_k(G^1_{abd}) + \LO_k(G^1_{ab\mid cd})\big) \\
        &+ \big( \LO_k(G^1_{abcd}) +  \LO_k(G^1_{abc\mid cd}) + \LO_k(G^1_{abd\mid cd})\big) \\
        &- \LO_k(G^1_{abcd\mid cd}) \\
        &+ \big(\LO_k(G_{abc}) + \LO_k(G_{abd})\big) \\
        &- \LO_k(G_{abcd}) \,.
    \end{align*}
Many of these terms actually represent the same set of colorings of $G$. For example, if all four edges are subdivided, then the addition of an extra leaf does not change the set of locally-optimal colorings, so they are the same. Similarly, if a non-voting edge connects two neighbors, and the edge to one neighbor is subdivided, then all three vertices must have the same color so both edges could be subdivided and the non-voting edge removed without changing the number of locally-optimal colorings. Canceling like terms gives us the much simpler equation
    \begin{equation*}
        \LO_k(G_{ab}) = \LO_k(G^1_{ab}) - \LO_k(G^1_{ab\mid cd}) + \LO_k(G_{abcd}) \,.
    \end{equation*}
\end{example}

\begin{figure}
    \centering
    \includegraphics[width=0.75\linewidth]{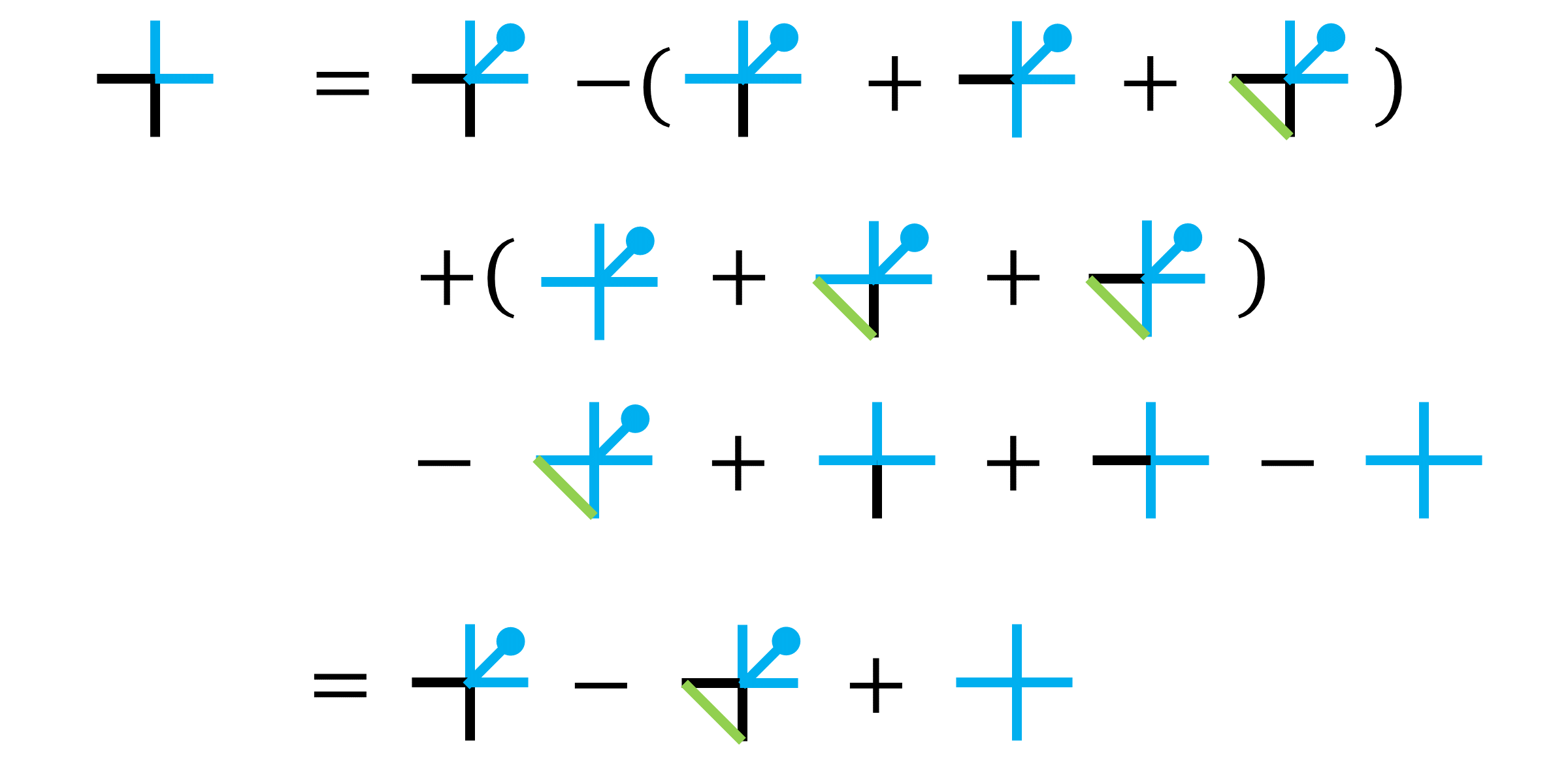}
    \caption{Demonstration of \cref{ex:degree-four-step-two} with degree 4. Like \cref{fig:degree_four_recur_1}, blue edges represent subdivided edges. An additional leaf in blue can be seen in the seven graphs in the top two rows. Non-voting edges, shown in green, connect vertices that have the same color but do not contribute towards local-optimality.}
    \label{fig:degree_four_recur_2}
\end{figure}

\begin{example}
    We can combine \cref{ex:degree-four} and \cref{ex:degree-four-step-two} to get the full analogue of \cref{thm:trivalent} for degree four vertices:
    \begin{align*}
        \LO_k(G) &= \big(\LO_k(G^1_{ab}) \begin{aligned}[t]
            &+ \LO_k(G^1_{ac}) + \LO_k(G^1_{ad}) \\
            &+ \LO_k(G^1_{bc}) + \LO_k(G^1_{bd}) + \LO_k(G^1_{cd})\big)
        \end{aligned} \\
        &- \big(\LO_k(G^1_{ab\mid cd}) 
        \begin{aligned}[t]
            &+\LO_k(G^1_{ac\mid bd}) + \LO_k(G^1_{ad\mid bc}) \\
            &+ \LO_k(G^1_{bc\mid ad}) + \LO_k(G^1_{bd\mid ac}) + \LO_k(G^1_{cd\mid ab})\big)
        \end{aligned} \\
        &+ 6\LO_k(G_{abcd}) \\
        &- 2\big(\LO_k(G_{abc}) + \LO_k(G_{abd}) + \LO_k(G_{acd}) + \LO_k(G_{bcd})\big) \\
        &+ 3\LO_k(G_{abcd}) \,.
    \end{align*}
    Combining the two occurrences of $\LO_k(G_{abcd})$ gives us the (slightly) shorter formula:
    \begin{align*}
        \LO_k(G) &= \big(\LO_k(G^1_{ab}) \begin{aligned}[t]
            &+ \LO_k(G^1_{ac}) + \LO_k(G^1_{ad}) \\
            &+ \LO_k(G^1_{bc}) + \LO_k(G^1_{bd}) + \LO_k(G^1_{cd})\big)
        \end{aligned} \\
        &- \big(\LO_k(G^1_{ab\mid cd}) 
        \begin{aligned}[t]
            &+\LO_k(G^1_{ac\mid bd}) + \LO_k(G^1_{ad\mid bc}) \\
            &+ \LO_k(G^1_{bc\mid ad}) + \LO_k(G^1_{bd\mid ac}) + \LO_k(G^1_{cd\mid ab})\big)
        \end{aligned} \\
        &- 2\big(\LO_k(G_{abc}) + \LO_k(G_{abd}) + \LO_k(G_{acd}) + \LO_k(G_{bcd})\big) \\
        &+ 9\LO_k(G_{abcd}) \,.
    \end{align*}
    Note that every graph appearing on the right-hand side is now ``closer'' to being bivalent dense (compared to $G$), in that a majority of the neighbors of $v$ have the same color as $v$. This indicates that we have indeed made progress towards being able to apply \cref{thm:base-case}.
\end{example}

\begin{figure}
    \centering
    \includegraphics[width=0.75\linewidth]{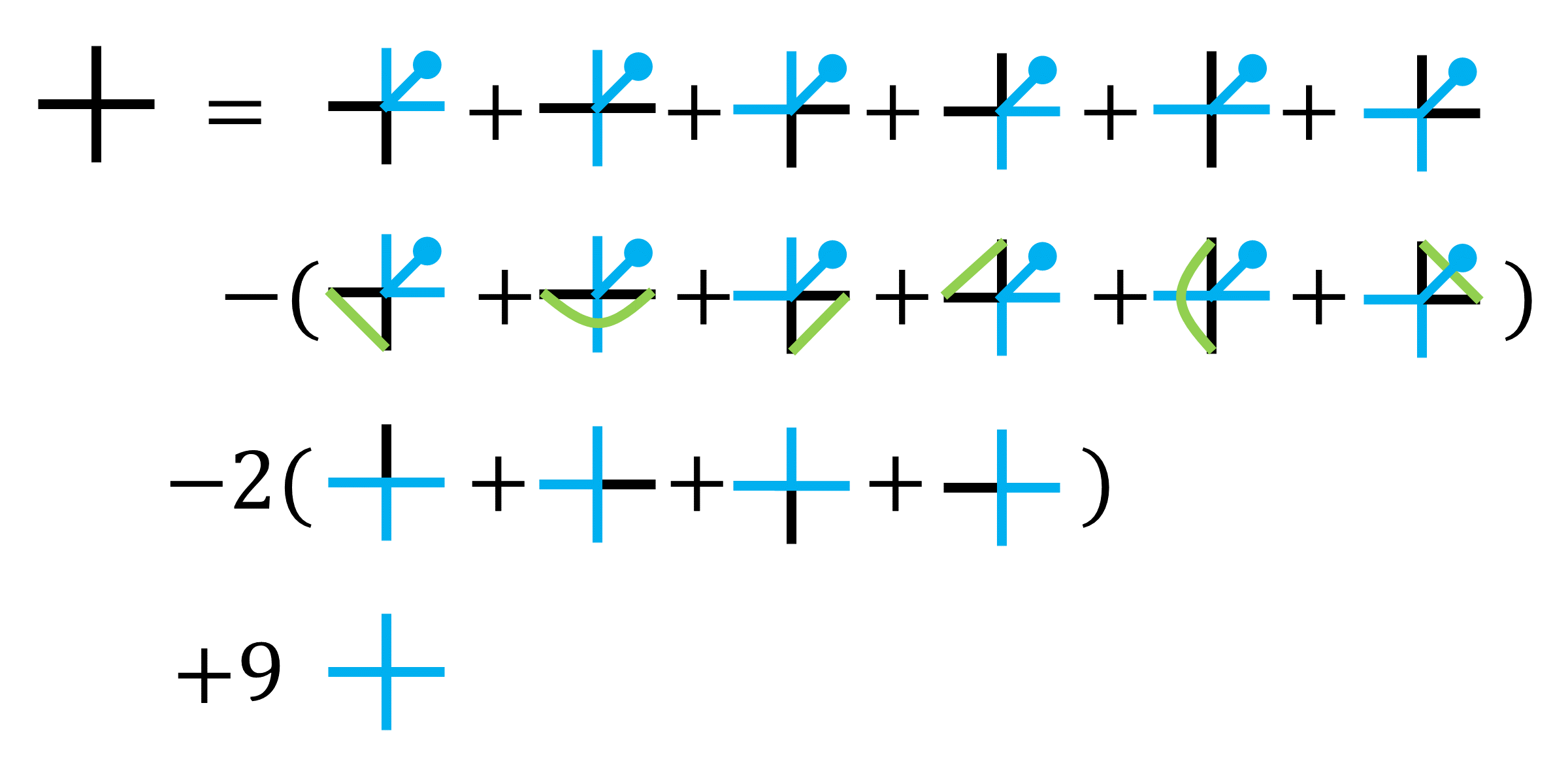}
    \caption{The complete recursion on four edges.}
    \label{fig:degree_four_complete}
\end{figure}

At this point, we have all the necessary pieces to build a recursive algorithm for computing $\LO_k(G)$:
\begin{enumerate}
    \item First, apply \cref{thm:any-degree} centered at each vertex of $G$ to express $\LO_k(G)$ as a linear combination of LO-polynomials of graphs with at least two same-color bivalent neighbors for every vertex.
    \item Next, apply \cref{thm:recurrence} at every vertex of $G$ to express $\LO_k(G)$ as a linear combination of LO-polynomials of bivalent-dense graphs.
    \item Finally, apply \cref{thm:base-case} to express $\LO_k(G)$ as a linear combination of powers of $k$.
\end{enumerate}
An implementation of this algorithm can be found in our code\footnote{\url{https://github.com/MattJonesMath/Strict_Gridlock_Graph_Colorings}}, linked in the footnote. We do not analyze the exact computational complexity of the algorithm in this paper, but the number of terms in \cref{thm:recurrence} grows exponentially with $n$ (both the number of values of $r$ and $\mathcal T \subseteq \mathcal P(W,b)$). While the algorithm we present is not optimized for speed, it is reasonable to assume that no polynomial time algorithms can exist, given the computational complexity of other graph coloring problems~\cite{Garey_Johnson_2009}.

One notable observation about the algorithm is that the run time appears to depend rather heavily on the \emph{order} in which vertices are visited by the recurrence. For example, if a graph contains one vertex of high degree and many vertices of low degree, we found that it was often much more efficient to apply \cref{thm:recurrence} at the low degree vertices first. This is likely because the number of terms in the recurrence equation balloons rapidly when the degree of the vertex increases. By starting with low degree vertices (where the recurrence is not too expensive), many edges attached to high degree vertices will already be subdivided, thus significantly reducing $n-b+2^{n-b}$, the number of graphs in the recurrence relation, for the high degree vertices.

This algorithm finally lets us prove that $\LO_k(G)$ is indeed a polynomial in $k$ for any graph $G$.

\begin{proof}[Proof of \cref{thm:polynomial}]
    To compute $\LO_k(G)$ for any graph $G$, we can first apply \cref{thm:any-degree} at every vertex to guarantee two subdivided edges at every vertex, then repeatedly apply \cref{thm:recurrence} to go from two subdivided edges to a majority of edges being subdivided. This lets us express $\LO_k(G)$ in terms of the LO-polynomials of graphs satisfying the bivalent dense hypothesis of \cref{thm:base-case}. Since these graphs have LO-polynomial $k^p$ for some $p$, this means that $\LO_k(G)$ is a polynomial for any $G$.
\end{proof}

\subsection{Set Partitions}
\label{sec:partitions}

While our previous results focused on graphs with subdivided edges and additional non-voting edges, we can also approach the LO-polynomial from the perspective of set partitions of the vertices. A \vocab{locally-optimal partition} $\pi$ of a graph $G = (V,E)$ is a set partition of the vertex set $V$ such that each vertex has a plurality of neighbors in the same part. We write $\pi \partitions G$ to denote that $\pi$ is a locally-optimal partition of $G$.

Locally-optimal partitions are like locally-optimal colorings where we have forgotten the exact colors and only remember the groupings of vertices. More precisely, to any locally-optimal coloring $c: V \to [k]$ we can associate the partition 
\begin{equation*}
    \pi_c = \{ c^{-1}(i) \}_{i \in [k]} \,.
\end{equation*}
In the opposite direction, each locally-optimal partition $\pi$ corresponds to
\begin{equation*}
    k^{\underline{\card{\pi}}} = k(k-1)(k-2)\dots(k-\card{\pi}+1)
\end{equation*}
locally-optimal $k$-colorings in this way. This gives us another formula for the LO-polynomial of a graph $G$, analogous to a well-known one for the chromatic polynomial involving a sum over independent sets in $G$.

\begin{theorem}\label{thm:partition}
Let $G$ be a graph. Then
    \begin{equation*}
        \LO_k(G) = \sum_{\pi \partitions G} k^{\underline{\card{\pi}}} \,.
    \end{equation*}
\end{theorem}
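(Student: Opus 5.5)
We prove \cref{thm:partition} by a bijective double-counting argument. We partition the set $\sLO_k(G)$ according to the induced partition $\pi_c$, and show that each fibre has exactly $k^{\underline{\card{\pi}}}$ elements.

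First, consider the map $\Phi: \sLO_k(G) \to \{\pi : \pi \partitions G\}$ given by $c \mapsto \pi_c$, where we discard the empty preimages $c^{-1}(i)$ so that $\pi_c$ is a genuine set partition of $V$. We check that $\Phi$ lands in locally-optimal partitions. Local optimality of $c$ at a vertex $v$ says that the color class of $c(v)$ appears strictly more often among the neighbors of $v$ than any other color class. Color classes of $c$ are exactly the blocks of $\pi_c$, so the block containing $v$ contains strictly more neighbors of $v$ than any other block. This is precisely the condition $\pi_c \partitions G$. The same translation runs in reverse: the condition ``$v$ has a plurality of neighbors in its own part'' depends only on the partition. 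Hence, for any partition $\pi$ of $V$ and any coloring $c$ with $\pi_c = \pi$, $c$ is locally optimal if and only if $\pi \partitions G$.

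Next, we compute the fibres. Fix $\pi \partitions G$ with blocks $B_1, \dots, B_m$, where $m = \card{\pi}$. A coloring $c$ satisfies $\pi_c = \pi$ exactly when $c$ is constant on each block and takes distinct values on distinct blocks. Such colorings correspond bijectively to injections $\{B_1,\dots,B_m\} \to [k]$, of which there are $k(k-1)\cdots(k-m+1) = k^{\underline{m}}$. This count is correctly $0$ when $m > k$. By the previous paragraph, every one of these colorings lies in $\sLO_k(G)$. Since $\sLO_k(G)$ is the disjoint union of the fibres $\Phi^{-1}(\pi)$, summing gives the formula.

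The only delicate point is the first step. We must make sure the plurality condition is genuinely a property of the partition, with ties between blocks handled exactly as ties between colors. We must also handle edge cases such as isolated vertices. An isolated vertex has no plurality in either language, so it is excluded consistently on both sides; both sides are then $0$. If non-voting edges are present, the same argument goes through provided we also require that their endpoints lie in the same block, again a partition-level condition. Everything else is routine counting.
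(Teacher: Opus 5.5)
Your proof is correct and follows the same route as the paper. The paper justifies the formula in the paragraph before the theorem: it sends each locally-optimal coloring $c$ to its partition $\pi_c$ and notes that each locally-optimal partition $\pi$ comes from exactly $k^{\underline{\card{\pi}}}$ colorings. You fill in the details the paper leaves implicit: discarding the empty preimages $c^{-1}(i)$, checking that the plurality condition depends only on the partition, counting each fibre as injections from blocks to $[k]$, and treating isolated vertices by the paper's stated convention.
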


\begin{proof}[Another proof of \cref{thm:polynomial}]
    \cref{thm:partition} above expresses $\LO_k(G)$ as a sum of powers of $k$, and therefore proves that $\LO_k(G)$ is a polynomial.
\end{proof}

\begin{proof}[Proof of \cref{thm:properties}] Each of the following parts of \cref{thm:properties} follows as a corollary of \cref{thm:partition} above.
    \begin{enumerate}[label=(\alph*)]
        \item Gridlocked colorings of $G$ with the maximum number of colors possible correspond to partitions with the highest $\card{\pi}$, which correspond to the highest degrees of the polynomials $k^{\underline{\card{\pi}}}$. Therefore, the degree of $\LO_k(G)$ is the maximum number of parts possible in a gridlocked coloring of $G$.
        \item The leading coefficient of $\LO_k(G)$ is the number of unique LO-partitions $\pi$ with maximum $\card{\pi}$.
        \item Since it is not possible to partition the vertices of $G$ into zero parts, it is not possible for $\card{\pi}=0$. Therefore, the constant term of $\LO_k(G)$ is always zero. \qedhere
    \end{enumerate}
\end{proof}

Much of our previous work in this paper can be phrased in terms of partitions of the vertices of a graph. In particular, the algorithms in \cref{sec:general-graphs} are implemented in our code using set partitions.

\section{Strict Gridlock vs Group Structure}

\begin{figure}
    \centering

    \begin{subfigure}{0.45\textwidth}
      \includegraphics[width = \textwidth]{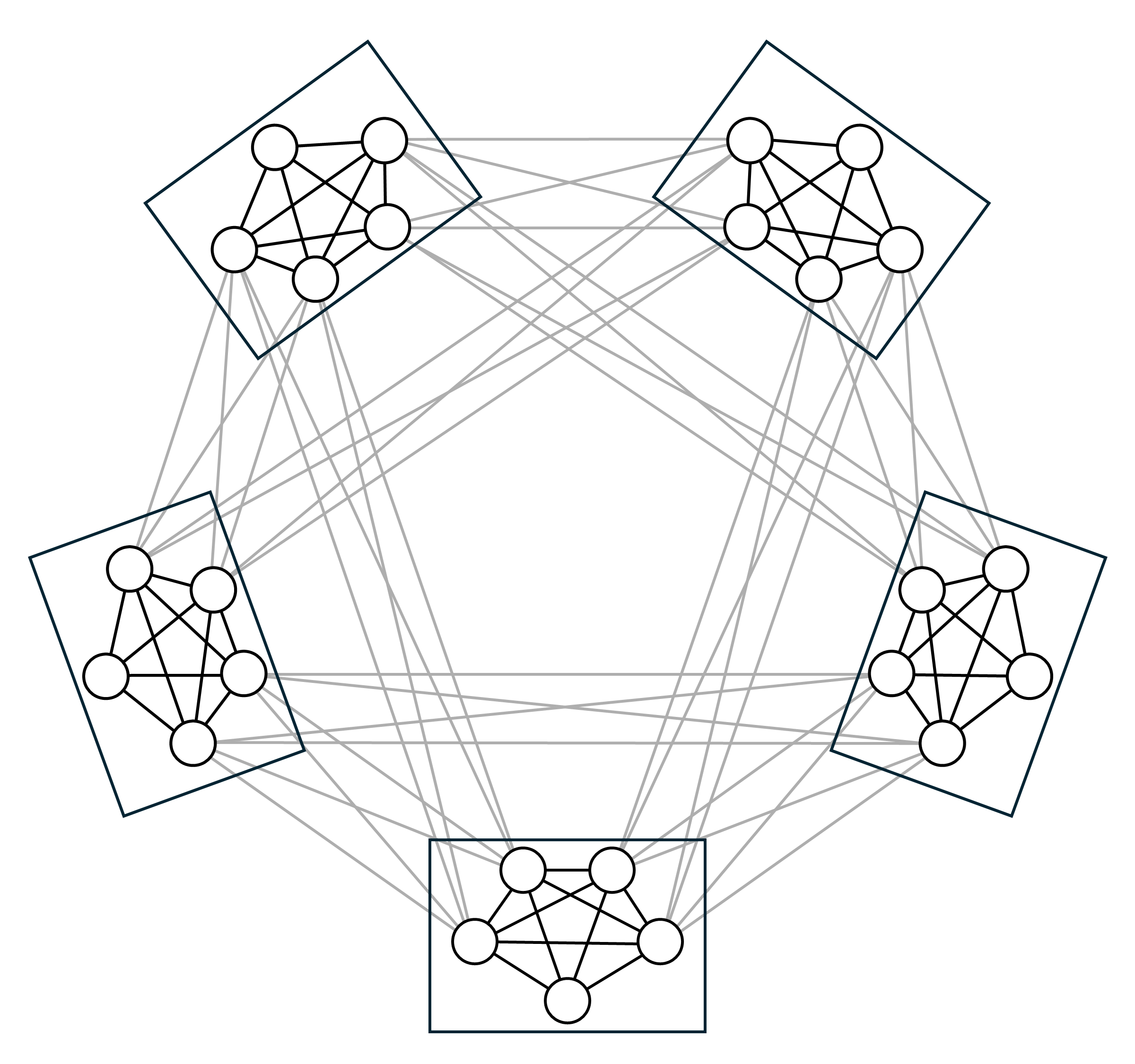}
      \caption{}
      \label{fig:G7}
    \end{subfigure}\hfill
    \begin{subfigure}{0.45\textwidth}
      \includegraphics[width = \textwidth]{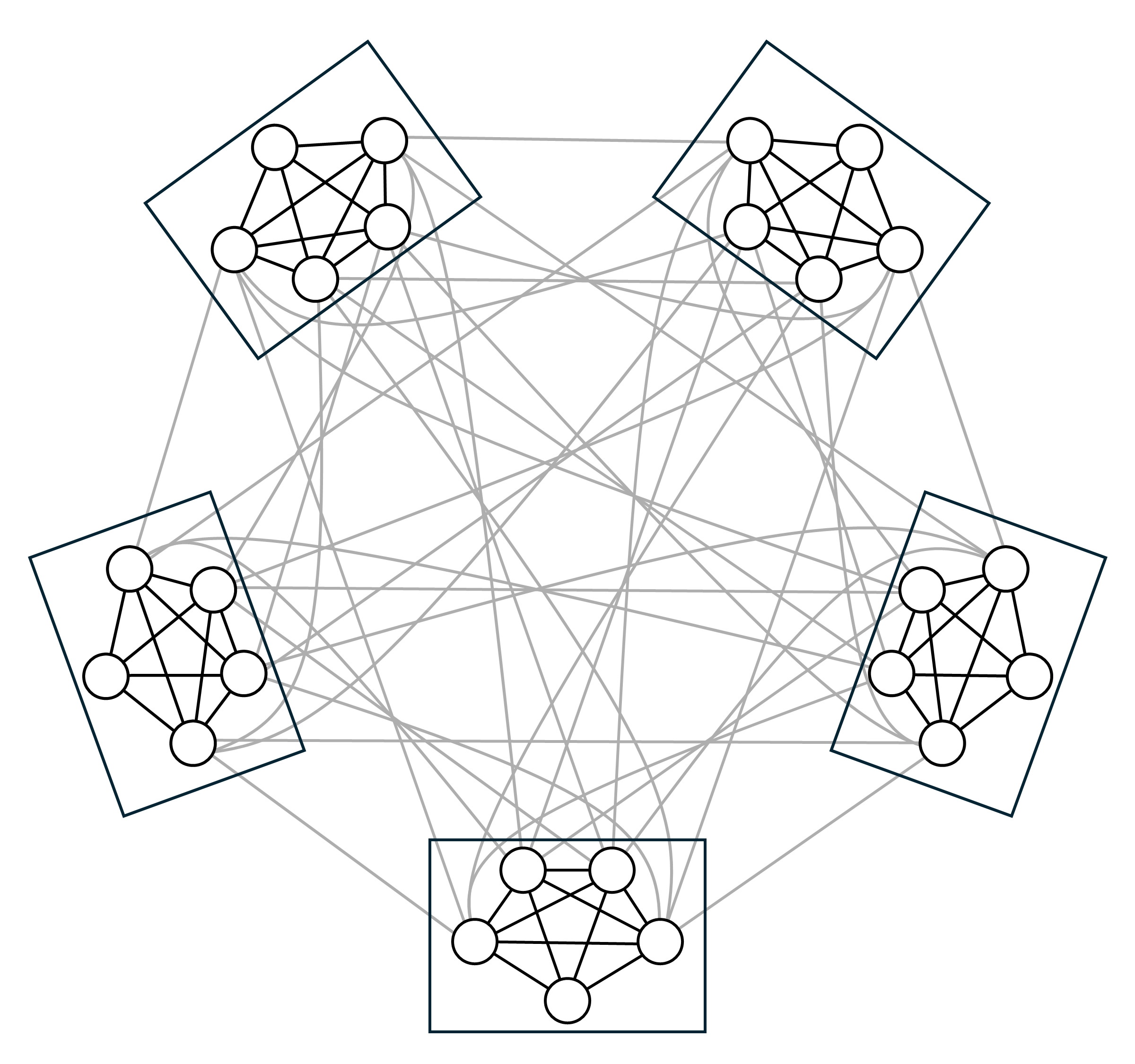}
      \caption{}
      \label{fig:G9}
    \end{subfigure}
    
\caption{Two graphs with similar group structure but different strict gridlock properties. Due to differences in the placement of between-clique edges, the graph on the left is less susceptible to gridlock than the graph on the right.}
    \label{fig:group_structure_graphs}
\end{figure}

Consider the two graphs shown in \cref{fig:group_structure_graphs}.
These graphs have extremely similar group structure; both have 5 fully-connected cliques of five vertices. Each clique has one ``exterior vertex'' with no between-clique edges and four ``interior vertices'' that have four in-clique and four between-clique edges. In both graphs, there are four edges connecting any two cliques, and so these graphs are largely indistinguishable to modularity-based community detection methods. The primary difference between the two is that in \cref{fig:G7}, the four between-clique edges connect two pairs of vertices, while in \cref{fig:G9}, the interior vertices all have one edge to each of the four other cliques.

Because of these differences, they have different SG polynomials. These graphs are too large to compute the strict gridlock polynomials using the recursion algorithm, but the graphs have enough structure that they can be described combinatorially. In \cref{sec:sg_poly_proof} of the appendix, we show that the SG polynomial of the graph in \cref{fig:G7} is $k^5 - 5k^4 + 10k^3 - 10k^2 + 4k$ and the SG polynomial of the graph in \cref{fig:G9} is $k^5 - 5k^2 + 4k$. For all values of $k$, the graph in \cref{fig:G9} has more strict gridlocked colorings than the graph in \cref{fig:G7}. For example, if $k = 2$, \cref{fig:G9} has 20 strict gridlocked colorings, while \cref{fig:G7} has none. 

These examples show that the SG polynomial and group structure can be somewhat decoupled. While group structure can drive gridlock, there are subtle network structures that can hinder or help the spread of information through a network without changing the communities inside the network.

\section{Conclusion}

The LO and SG polynomials are exciting new tools that expand the utility of graph theory methods to study group processes like consensus. Specifically, the SG polynomial can grant new insights into how network structure can subtly promote or discourage consensus in a network. Traditional proper graph coloring techniques have been used to examine the role of network structure in the past, but these typically count proper colorings, which represent solutions to a different class of social challenges. Community detection and clustering methods have also been used to study gridlock~\cite{Zhang_Friend_Traud_Porter_Fowler_Mucha_2008} but as we have shown, these are ill-fitting tools that miss the subtleties that the more bespoke SG polynomial can identify.

This work expands on previous work studying graph colorings and coordination problems analytically~\cite{Jones_Pauls_Fu_2021a, Jones_Pauls_Fu_2021b}, and there are many different paths along which this work can be extended.

First, the process of reaching consensus in a group is not static, but rather a dynamic process as the population attempts to solve the problem iteratively. There are colorings that are not locally-optimal but that can never reach a consensus coloring by choosing locally-optimal colors. In this work, we avoid those dynamics by only looking at fixed points in this process (hence ``strict'' gridlock), but this analysis can certainly be expanded. Identifying such ``weakly'' gridlocked states (see \cref{fig:nonstrict_gridlock} for an example) is significantly more challenging, because it requires thinking of the entire stochastic process instead of looking at a single point in time combinatorially, but we believe that at least some of these states can be identified and counted just like the strict gridlocked colorings.

\begin{figure}
    \centering
    \includegraphics[width=0.5\linewidth]{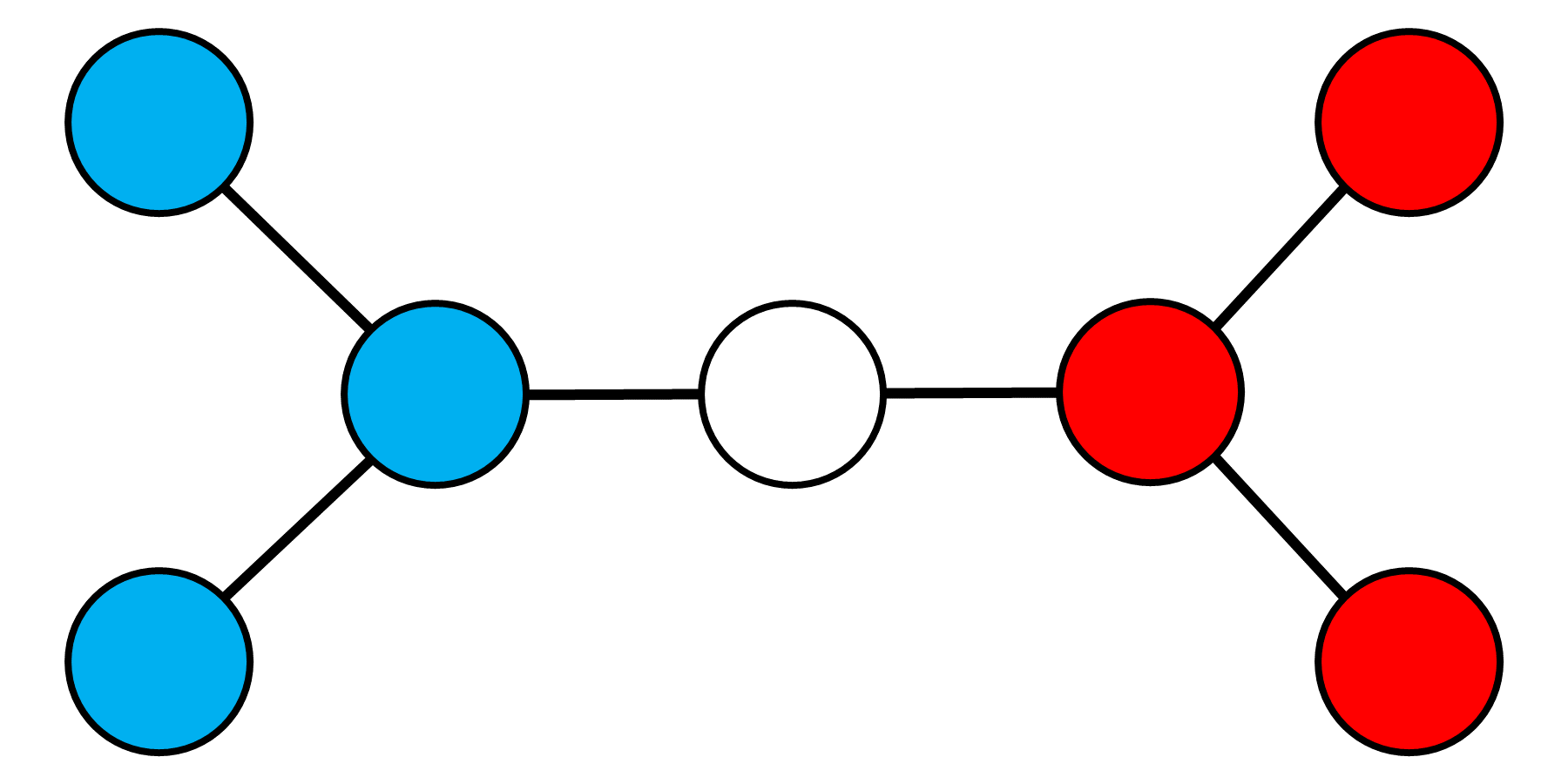}
    \caption{An example of non-strict gridlock on two colors. The central vertex will never have a locally-optimal choice, because the two neighbors are fixed on opposing colors, but this state will also never reach a uniform coloring.}
    \label{fig:nonstrict_gridlock}
\end{figure}

However, the SG polynomial can still, without any further development, be used to study group behavior and consensus formation. Applications could include an analysis of decision-making legislative bodies like the United States Congress, comparing networks (and their gridlocked colorings) with their legislative outputs, or studying the evolution of social networks in social animals or pre-modernized human groups. 

Another avenue of exploration is to continue studying gridlock when searching for a proper coloring. In~\cite{Jones_Pauls_Fu_2021b}, uniform and proper colorings are thought of as solutions to two different types of games; coordination and anti-coordination, respectively. These two classes of games are qualitatively very different, and yet they share some of the same challenges. In fact, ~\cite{Jones_Pauls_Fu_2021b} showed that these two games are equivalent when using two colors on bipartite graphs, in the sense that finding a uniform coloring is just as probable as finding a proper coloring. Combining the recursive algorithm in this paper with the isomorphism defined in \cite{Jones_Pauls_Fu_2021b} immediately categorizes all of the colorings that are strictly gridlocked in the search for a proper 2-coloring of a bipartite graph.

Unfortunately, this will certainly not hold for graphs with chromatic numbers of three or greater, and the relationship between coordination and anti-coordination is not fully understood. In the future, a similar approach to the recursive algorithm presented in this work could be used to study colorings that are gridlocked when the group is trying to find a proper coloring.

Finally, the LO and SG colorings are built around the simple update rule in which individuals select the plurality color among their neighbors. Adjustments to this strategy, perhaps by adding random noise or behavior~\cite{Shirado_Christakis_2017,Jones_Pauls_Fu_2021a} can help alleviate these issues but also require careful application to not backfire and prevent the group from settling into a global solution. Additionally, other kinds of human behavior like stubbornness have also been shown to improve outcomes~\cite{Kearns_Judd_Tan_Wortman_2009}. Incorporating more sophisticated behavior and update rules could improve the group's performance in solving the uniform coloring problem but would require a stochastic process framework instead of the simpler combinatorial approach used here. 

\bibliography{refs}

\begin{appendices}
\section{Computing the SG Polynomials of the graphs in \texorpdfstring{\cref{fig:group_structure_graphs}}{Figure 6}}\label{sec:sg_poly_proof}

\begin{lemma}\label{thm:consistent_color}
    In both graphs, each $5$-clique must be monochromatic in any locally-optimal coloring. 
\end{lemma}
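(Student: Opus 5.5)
The plan is to fix one $5$-clique $Q$ and use the local-optimality condition twice: first at the exterior vertex $e$ of $Q$, then at the interior vertices. The exterior vertex is the natural starting point. Its only neighbors are the four interior vertices of $Q$, so locally-optimal colorings restrict the color pattern on $Q$ very strongly. The interior vertices each have $8$ neighbors: $e$, the three other interior vertices of $Q$, and four vertices in other cliques. I expect the clique's own vertices to outvote the at most four outside neighbors in almost every configuration.

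Step 1 (exterior vertex). Among the four interior colors there must be a strict plurality color $c$, and $c(e)=c$. Up to renaming, the possible color multisets on the interior vertices are $(4)$, $(3,1)$ and $(2,1,1)$. A $(2,2)$ split has no plurality, and $(1,1,1,1)$ is excluded for the same reason.

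Step 2 (the $(3,1)$ case). Let $u$ be the lone interior vertex, with color $d\ne c$. Inside $Q$, $u$ sees $e$ and three interior vertices, all colored $c$, so color $c$ appears at least $4$ times among its neighbors. Color $d$ can appear only among its $4$ outside neighbors, so at most $4$ times. Hence $d$ is not a strict plurality at $u$, which is a contradiction. The case $(4)$ already gives a monochromatic clique.

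Step 3 (the $(2,1,1)$ case, the main obstacle). Let the interior colors be $c,c,d,d'$ with $e$ colored $c$, and let $u$ be the interior vertex colored $d$. Inside $Q$, $u$ sees $c$ three times and $d'$ once. So local optimality at $u$ forces all four outside neighbors of $u$ to be colored $d$, and the same holds for the vertex $u'$ colored $d'$ with color $d'$. Purely local counting does not rule this out. I would therefore exploit the specific between-clique wiring of each graph, and also the constraints at the two $c$-colored interior vertices $w_1,w_2$. Each $w_i$ sees $c$ only twice inside $Q$ (from $e$ and the other $w_j$), together with one $d$ and one $d'$. So $w_i$ needs enough outside neighbors colored $c$ to beat every other color.

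To close this case, I would take an extremal choice. Among all cliques of $(2,1,1)$ type, pick one, and among its minority vertices pick one, say $u$ of color $d$. Then follow the four forced $d$-neighbors $x$ of $u$ into the other cliques. Each such $x$ lies in a clique $Q'$ whose pattern is $(4)$, $(3,1)$ or $(2,1,1)$, and $(3,1)$ is already excluded by Step 2. If $Q'$ is monochromatic of color $d$, I would check against the explicit edge pattern whether this is consistent with the outside neighbors of $w_1,w_2$ needing color $c$. Otherwise $x$ is itself a minority vertex of a $(2,1,1)$ clique, and its four outside neighbors are forced to have color $d$. In \cref{fig:G9} each interior vertex has exactly one neighbor in every other clique. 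I would therefore try to show that color $d$ propagates to enough cliques that some $w_i$, or the $c$-colored vertices of some other clique, loses its plurality. I would handle \cref{fig:G7}, where the between-clique edges come in pairs, analogously. This propagation and counting argument, carried out separately for the two wirings, is the step I expect to require the real work. The remainder of the proof is the short local counting in Steps 1 and 2.
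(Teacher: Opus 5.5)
\textbf{There is a genuine gap in Step 3.} Your Steps 1 and 2 match how the paper begins: the exterior vertex rules out a $(2,2)$ split, and the lone vertex in a $(3,1)$ clique is outvoted. But the $(2,1,1)$ case is the whole content of the lemma, and you leave it as a plan.

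For \cref{fig:G7} you are missing the idea that makes propagation unnecessary. The interior vertices of each clique come in pairs $p,p'$ that are adjacent to each other and otherwise have identical neighborhoods: the rest of the clique plus the same four outside vertices, since the four edges between two cliques join a pair in one clique to a pair in the other. Suppose $c(p)\ne c(p')$, and let $m_x$ count color $x$ on the common set $N(p)\setminus\{p'\}=N(p')\setminus\{p\}$. Local optimality at $p$ and at $p'$ would require $m_{c(p)}>m_{c(p')}+1$ and $m_{c(p')}>m_{c(p)}+1$, which is impossible. So each pair is monochromatic, the interior pattern is $(4)$ or $(2,2)$, and Step 1 finishes. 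This is the paper's argument.

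For \cref{fig:G9} the problem is worse than an unfinished step. The $(2,1,1)$ case cannot be excluded using only local counting and the fact that each interior vertex has exactly one neighbor in each other clique. So your hoped-for mechanism, that color $d$ spreads until some $w_i$ is outvoted, is not available in general. Here is a counterexample.
\begin{itemize}
\item Label the interior vertices of clique $i$ as $x_{i,1},\dots,x_{i,4}$, and join $x_{i,a}$ to $x_{j,a}$ for all $j\ne i$. This satisfies every structural property the paper states for \cref{fig:G9}.
\item In every clique, color $e_i,x_{i,1},x_{i,2}$ red, $x_{i,3}$ green and $x_{i,4}$ blue.
\item Each exterior vertex sees red, red, green, blue.
\item Each red interior vertex sees six red neighbors, one green and one blue.
\item Each green vertex sees four green, three red and one blue neighbors, and symmetrically for blue.
\end{itemize}
This coloring is locally optimal, yet no clique is monochromatic.

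So the proof must use the particular matchings drawn in \cref{fig:G9}, and the paper does exactly that. It splits according to the relative position of the red and green minority vertices in the drawing and traces their forced outside neighbors. It then reaches one of two contradictions: a forced-green vertex with four red neighbors, or two cliques whose interiors become $(2,2)$ splits, leaving their exterior vertices with no plurality.

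You could also have extracted a first step here. The between-clique edges of \cref{fig:G9} are perfect matchings, so the two minority vertices of $Q$ have distinct neighbors in every other clique. Hence every other clique contains both $d$ and $d'$ and is itself of type $(2,1,1)$, and your subcase ``$Q'$ monochromatic of color $d$'' never arises.
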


\begin{proof}
    To start, choose any $5$-clique and consider the exterior (degree 4) vertex. Suppose, without loss of generality, that the vertex is blue. There are three cases: all five vertices in the clique are blue, three of the interior (degree 8) vertices are blue and one is another color (red), or two interior vertices are blue, one is red, and one is a third color (green). The case where the interior vertices come in two distinct pairs is not allowed because the exterior vertex in that clique will not have a locally-optimal choice.

    First, observe that in \cref{fig:G7}, interior vertices come in pairs, and each pair has the exact same set of neighbors. Therefore, in a locally-optimal coloring, these vertices must have the same color, which rules out every case except where all five interior vertices are blue.

    For \cref{fig:G9}, first consider the case where three interior vertices are blue and the last is red. This vertex has four blue neighbors and eight neighbors total. This coloring cannot be locally-optimal because this vertex cannot have more red than blue neighbors, but they are colored red.

    \begin{figure}
        \centering
    
        \begin{subfigure}{0.45\textwidth}
          \includegraphics[width = \textwidth]{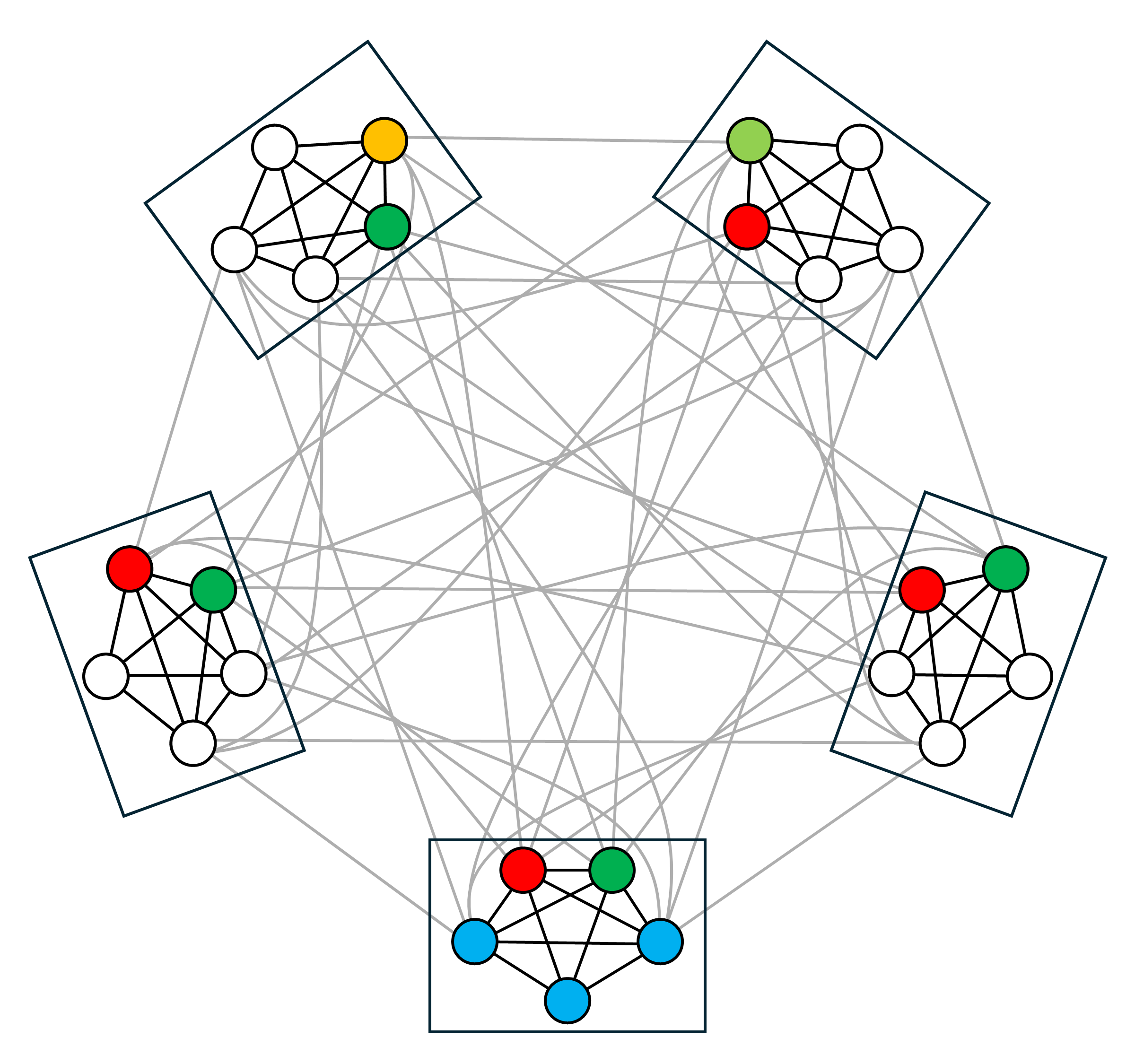}
          \caption{}
          \label{fig:G9_adj}
        \end{subfigure}\hfill
        \begin{subfigure}{0.45\textwidth}
          \includegraphics[width = \textwidth]{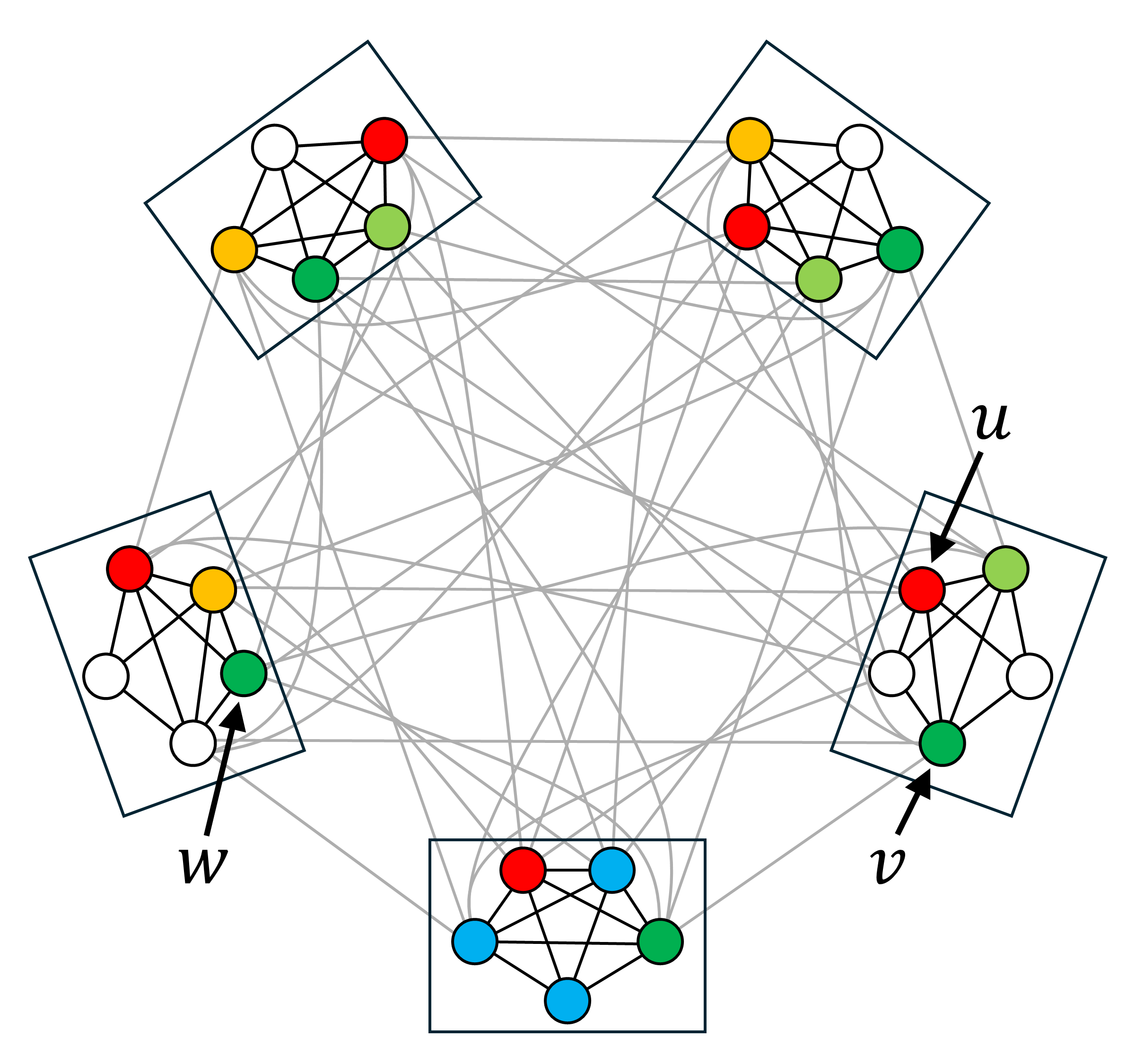}
          \caption{}
          \label{fig:G9_not_adj}
        \end{subfigure}
        \caption{Two cases where the bottom clique has three blue, one red, and one green vertex, depending on if the red and green vertices are adjacent.}
        \label{fig:g9_colorings}
    \end{figure}

    Finally, consider the case where there are two blue, one red, and one green vertex, shown in \cref{fig:g9_colorings}. In \cref{fig:G9_adj}, the red and green vertices are next to each other. For both of these vertices, all four non-clique neighbors must have the same color to ensure the coloring is locally-optimal, and these neighbors are colored in the figure. In particular, the light green vertex at the top must be green, but it is adjacent to four red vertices, so the coloring is not locally-optimal. Similarly, the orange vertex is adjacent to four green vertices.

    In \cref{fig:G9_not_adj}, the red and green vertices are not adjacent. Again, each of the two vertices must share a color with their four non-clique neighbors, which have been colored dark red and green, respectively. Consider vertices $u$ and $v$; at least one of them must be the only vertex of that color in the clique. Suppose without loss of generality that $u$ is the only red vertex in its clique. Then all four of $u$'s non-clique neighbors must be the same color, indicated by the three orange vertices. But now examine vertex $w$, which is in a clique with two red vertices so its non-clique neighbors must all share the same color. This gives us the light green vertices. Notice that the two cliques at the top have two red and two green vertices, so there is no choice for the exterior vertices that makes the coloring locally-optimal.

    Therefore, the only possible locally-optimal colorings assign all five vertices in a clique to the same color.
\end{proof}

\begin{lemma}
    The SG polynomial for \cref{fig:G7} is $k^5 - 5 k^4 + 10 k^3 - 10 k^2 + 4 k$.
\end{lemma}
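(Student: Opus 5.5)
By \cref{thm:consistent_color}, every locally-optimal coloring of the graph in \cref{fig:G7} is constant on each of the five $5$-cliques. So I would reduce the problem to counting colorings $f:\{1,\dots,5\}\to[k]$ of the cliques, find a local condition on $f$ equivalent to local optimality, and count the admissible $f$. The target is $\LO_k = k^5-5k^4+10k^3-10k^2+5k = (k-1)^5+1$, and then $\SG_k = \LO_k - k$ gives the stated polynomial.

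First I would fix the structure of \cref{fig:G7} precisely. Each clique has one exterior vertex and two pairs of interior vertices, and the two vertices of a pair have identical neighborhoods. Each pair is joined completely, by $2\times 2=4$ edges, to a pair in another clique. The statement that any two cliques are joined by exactly four edges forces this: each interior vertex has exactly two neighbors in each of two other cliques. So at the level of cliques, the two pairs of clique $A$ split the four other cliques into two $2$-sets $\{B,C\}$ and $\{D,E\}$. Reading this off the figure, I expect the cyclic labelling in which clique $i$ has one pair adjacent to cliques $i\pm1$ and the other to cliques $i\pm2$ (indices mod $5$).

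Next I would characterize local optimality for a clique-constant coloring. Exterior vertices have all neighbors in their own clique, so they are always fine. Take an interior vertex $v$ in clique $A$ whose pair is joined to cliques $B$ and $C$. It has $4$ neighbors of color $f(A)$ inside its clique, plus $2$ neighbors of color $f(B)$ and $2$ of color $f(C)$. Its own color therefore receives at least $4$ votes, and any other color receives at most $4$. The only failure is a tie, which happens exactly when $f(B)=f(C)\neq f(A)$. So $f$ is locally optimal if and only if, for every clique $A$ and each of its two partner sets $\{B,C\}$, it is not the case that $f(B)=f(C)\ne f(A)$.

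Finally I would count the admissible $f$. With the cyclic structure, the partner sets are $\{i-1,i+1\}$ and $\{i-2,i+2\}$ for each $i$. I would count through \cref{thm:partition}, classifying set partitions $\pi$ of $\mathbb{Z}_5$ by their block structure:
\begin{itemize}
\item The single-block partition contributes $k$.
\item The all-singletons partition contributes $k^{\underline{5}}$, since no partner set is ever monochromatic.
\item For each remaining type (one pair; two pairs; a triple; a triple and a pair; a quadruple), I would decide which placements avoid a forbidden configuration, namely a block containing both elements of some partner set of a vertex outside that block.
\end{itemize}
For example, every block of size at least $2$ contains some partner set $\{i-1,i+1\}$ or $\{i-2,i+2\}$, since any two distinct elements of $\mathbb{Z}_5$ are such a set for a unique $i$. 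The condition then says that $i$ must lie in the same block, so admissible blocks are closed under this "midpoint" operation. Blocks closed under midpoints must be all of $\mathbb{Z}_5$ or singletons. That would give $\LO_k = k + k^{\underline{5}}$.

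However, $k^{\underline 5} + k$ does not equal $(k-1)^5+1$. So the main obstacle is pinning down the actual pairing pattern in \cref{fig:G7}; the cyclic guess above is probably wrong. I would redo the midpoint-closure analysis for the correct pattern and check the resulting partition sum against $(k-1)^5+1$. Concretely, I would expand $(k-1)^5+1$ in falling factorials to see which partition counts it requires, and use that to guide and confirm the combinatorial classification. As a sanity check, $k=2$ should give no strict gridlock.
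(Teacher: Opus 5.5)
There is a genuine gap. Your reduction to clique-level colorings $f$ is correct, and so is your tie criterion: an interior vertex of clique $A$ whose pair is joined to cliques $B,C$ fails exactly when $f(B)=f(C)\neq f(A)$. These are the same ingredients the paper uses.

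The problem is that you never determine which 2-sets of cliques are the partner sets, and the answer depends entirely on that. The verbal description (pairs with identical neighborhoods, $K_{2,2}$ joins, four edges between any two cliques) does not pin the graph down. Your guessed pattern $\{i\pm1\},\{i\pm2\}$ satisfies that description, and your midpoint-closure argument for it is correct. But it therefore yields a different graph, with $\SG_k=k^{\underline{5}}$, which contradicts the statement. Ending with ``redo the analysis for the correct pattern, guided by expanding $(k-1)^5+1$'' amounts to reverse-engineering the target, so the proposal as written proves nothing about the stated polynomial.

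The missing ingredient is the structural fact the paper relies on: each pair's out-clique edges go to two cliques that are adjacent in the pentagonal arrangement. The two partner sets of clique $i$ must partition the other four cliques, and both must be consecutive, so they are forced to be $\{i+1,i+2\}$ and $\{i-2,i-1\}$.

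Hence the partner sets are exactly the five consecutive pairs $\{j,j+1\}$. Each of them serves two cliques, namely $j-1$ and $j+2$. Your tie criterion then reads: if $f(j)=f(j+1)$, then $f(j-1)=f(j+2)=f(j)$. This propagates around the cycle, so the coloring is a consensus coloring. Otherwise no consecutive pair is monochromatic, meaning $f$ is a proper coloring of $C_5$. Every such $f$ is admissible, since no partner set is monochromatic.

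This gives $\LO_k=k+(k-1)^5-(k-1)=(k-1)^5+1$, so $\SG_k$ is the chromatic polynomial of $C_5$, which is the stated polynomial. The falling-factorial and partition bookkeeping you planned is unnecessary once this pattern is in hand.
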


\begin{proof}
    By Lemma \cref{thm:consistent_color}, all five cliques must have consistent color choices. However, the interior vertices have as many out-clique edges and in-clique edges, and these out-clique edges lead to two adjacent cliques. If two red cliques are adjacent and the next clique over is blue, the blue interior vertices will not be locally-optimal. Therefore, every strict gridlocked coloring of \cref{fig:G7} resembles a proper coloring of a five-cycle. The SG polynomial of $\cref{fig:G7}$ is the chromatic polynomial of the five-cycle; it is well-known that this polynomial is $k(k-1)^4 - k(k-1)^3 + k(k-1)(k-2)$, which can also be written as $k^5 - 5 k^4 + 10 k^3 - 10 k^2 + 4 k$.
\end{proof}

\begin{lemma}
    The SG polynomial for \cref{fig:G9} is $k^5 - 5 k^2 + 4 k$.
\end{lemma}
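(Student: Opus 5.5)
By \cref{thm:consistent_color}, every locally-optimal coloring of the graph in \cref{fig:G9} is constant on each of the five $5$-cliques. So I would identify such a coloring with a function $f$ from the set of five cliques to $[k]$. The plan is to determine exactly which of the $k^5$ such functions give locally-optimal colorings, and then subtract the $k$ consensus colorings to obtain $\SG_k$.

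Fix a function $f$ and check local optimality vertex by vertex. An exterior vertex has all four of its neighbors inside its own monochromatic clique, so it is always satisfied. Now take an interior vertex $x$ in a clique $C$ of color $f(C)$. Its eight neighbors are the four other vertices of $C$, all colored $f(C)$, and exactly one vertex in each of the four other cliques. This is the defining feature of \cref{fig:G9}, which I would read off from the construction.
\begin{itemize}
\item The number of neighbors of $x$ with color $f(C)$ is $4 + m$, where $m$ is the number of other cliques $C'$ with $f(C') = f(C)$.
\item Any other color $y \ne f(C)$ appears among the neighbors of $x$ at most $4$ times.
\item It appears exactly $4$ times only when all four other cliques have color $y$.
\end{itemize}
Hence $f(C)$ is the strict plurality color at $x$ unless $m = 0$ and the other four cliques all share a single color $y \neq f(C)$. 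So a function $f$ fails to be locally-optimal exactly when one clique receives a color $x$ and the remaining four all receive one common color $y \ne x$.

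Next I would count the bad functions. There are $5$ choices for the odd clique, $k$ choices for $y$ and $k-1$ choices for $x$. These configurations are distinct, because the odd clique is uniquely determined by the coloring. This gives $5k(k-1)$ bad functions, so
\begin{equation*}
\LO_k = k^5 - 5k(k-1), \qquad \SG_k = k^5 - 5k^2 + 5k - k = k^5 - 5k^2 + 4k \,.
\end{equation*}

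The only delicate step is the structural claim that each interior vertex of \cref{fig:G9} has exactly one neighbor in each other clique. This is what makes the neighbor count uniform. It comes from the description of the figure in the main text, and I would state it explicitly. Everything else is bookkeeping once \cref{thm:consistent_color} is assumed.
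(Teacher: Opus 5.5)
Your proposal is correct and follows essentially the same route as the paper: reduce via \cref{thm:consistent_color} to color assignments on the five cliques, show that the only failing assignments are those with four cliques sharing one color and the fifth clique a different color (there are $5k(k-1)$ of these), and subtract the $k$ consensus colorings. Your explicit neighbor count at interior vertices ($4+m$ versus at most $4$) just spells out the verification that the paper states in a single sentence.
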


\begin{proof}
    Each interior vertex is connected to all four of the other cliques. Any assignment of colors to the five cliques will be locally optimal unless four cliques have the same color which is different from the fifth clique, and there are $5k(k-1)$ such colorings. Since we get the SG polynomial from the LO polynomial by subtracting $k$, the SG polynomial for \cref{fig:G9} is $k^5 - 5k(k-1) - k$, which is equivalently $k^5 - 5 k^2 + 4 k$.
\end{proof}

\end{appendices}

\end{document}